\definecolor{darkred}{rgb}{0.75,0,0.3}
\newcommand\ac{\tilde a}
\newcommand\AND{\quad\text{and}\quad}
\newcommand\C{\mathbb C}
\newcommand\DD{\mathbb D}
\newcommand\ep{\varepsilon}
\newcommand\Fc{\wt F}
\newcommand\ff{g}
\newcommand\gh{g} 
\newcommand\hor{\mathfrak{h}}
\newcommand\im{\mathfrak{i}\,}
\newcommand\KK{\mathcal{K}}
\newcommand\la{\lambda}
\newcommand\Lap{\Delta}
\newcommand\mm{\mathsf m}
\newcommand\N{\mathbb N}
\newcommand\res{\mathsf{res}}
\newcommand\uno{\mathbf{1}}
\newcommand\wh{\widehat}
\newcommand\wt{\widetilde}
\begin{document}
\theoremstyle{theorem}

\newtheorem{theorem}{Theorem}
\numberwithin{theorem}{section}
\newtheorem{pro}[theorem]{Proposition}
\newtheorem{lem}[theorem]{Lemma}
\newtheorem{cor}[theorem]{Corollary}

\theoremstyle{definition}
\newtheorem{dfn}[theorem]{Definition}
\newtheorem{rem}[theorem]{Remark}

$\,$ \vspace{-1truecm}
\title{
Boundary behaviour of
$\lambda$-polyharmonic functions on regular trees}
\author{\bf Ecaterina Sava-Huss and Wolfgang Woess}
%
\address{\parbox{.8\linewidth}{Institut f\"ur Mathematik \\ 
Universit\"at Innsbruck\\
Technikerstrasse 13, A-6020 Innsbruck, Austria\\ $\,$}} 
\email{Ecaterina.Sava-Huss@uibk.ac.at}

\address{\parbox{.8\linewidth}{Institut f\"ur Diskrete Mathematik,\\ 
Technische Universit\"at Graz,\\
Steyrergasse 30, A-8010 Graz, Austria\\ $\,$}}
\email{woess@tugraz.at}

\date{\today} 
\thanks{Supported by Austrian Science Fund projects FWF P31237 and W1230. 
The second author acknowledges the hospitality of Marc Peign\'e and Kilian Raschel at 
Institut Denis-Poisson, Universit\'e de Tours, France and 
support in the framework of K.R.'s starting grant from the  
European Research Council (ERC) under the Grant Agreement  
No759702.}
\subjclass[2010] {31C20; 
                  05C05, 
                  60G50 
                  }
                  \keywords{Regular tree, simple random walk, $\la$-polyharmonic functions, 
                  Dirichlet and Riquier problems at infinity, Fatou theorem}
\begin{abstract}
This paper studies the boundary behaviour of $\lambda$-polyharmonic functions 
for the simple random walk operator on a regular tree, where $\lambda$ is complex and
$|\lambda|> \rho$, the $\ell^2$-spectral radius of the random walk. In particular,
subject to normalisation by spherical, resp. polyspherical functions, Dirichlet
and Riquier problems at infinity are solved and a non-tangential Fatou theorem is proved.
\end{abstract}

\maketitle
       
\markboth{{\sf Ecaterina Sava-Huss and Wolfgang Woess}}
{{\sf Boundary behaviour of $\lambda$-polyharmonic functions}}
\baselineskip 15pt

\vspace*{-.5cm}
\section{Introduction}\label{sec:intro}

A complex-valued function $f$ on a Euclidean domain $D$ is called \emph{polyharmonic 
of order $n$}, if it satisfies $\Lap^n f \equiv 0$, 
where $\Lap$ is the classical Euclidean Laplacian.
The study of polyharmonic functions originates in  work of the $19^{\text{th}}$ 
century, and is pursued very actively. Basic references are the books
by {\sc Aronszajn, Creese and Lipkin}~\cite{ACL} and by 
{\sc Gazzola, Grunau and Sweers}~\cite{GGS} . 

A classical theorem of 
{\sc Almansi}~\cite{Al} says that if the domain $D$ is star-like with respect
to the origin, then every polyharmonic function of order $n$
has a unique decomposition
$$
f(z) = \sum_{k=0}^{n-1} |z|^{2k}\, h_k(z)\,,
$$
where each $h_k$ is harmonic on $D$, and $|z|$ is the Euclidean length of $z \in D$.
In particular, if the domain is the unit disk
$$
\DD = \{ z= x + \im y \in \C : |z| = \sqrt{x^2 + y^2} < 1 \}, 
$$
then thanks to a Theorem of {\sc Helgason}~\cite{He}, Almansi's decomposition
can be written as an integral representation over the boundary $\partial \DD$ of the disk,
that is, the unit circle, with respect to the \emph{Poisson kernel}
$P(z,\xi) = (1 - |z|^2)/|\xi - z|^2 \;(z \in \DD\,,\; \xi \in \partial \DD)$. Namely, 
\begin{equation}\label{eq:almansi}
f(z) = \sum_{k=0}^{n-1} \int_{\partial \DD} |z|^{2k}\, P(z,\xi)\,d\nu_k(\xi)\,,
\end{equation}
where $\nu_0, \dots, \nu_{n-1}$ are certain distributions, namely
\emph{analytic functionals} on the unit circle. For details on those functionals, see e.g. the nice
exposition by {\sc Eymard}~\cite{Ey}.

A smaller body of work is available on the discrete counterpart, where the
Laplacian is a difference operator arising from a reversible Markov chain transition
matrix on a graph.
Regarding boundary integral representations comparable to \eqref{eq:almansi},
{\sc Cohen et al.}~\cite{CCGS} have provided such a result concerning polyharmonic functions
for the simple random walk operator on a homogeneous tree. This has recently
been generalised by {\sc Picardello and Woess}~\cite{PiWo} to arbitrary nearest neighbour
transition operators on arbitrary trees which do not need to be locally finite: 
\cite{PiWo} provides a boundary integral representation for $\lambda$-polyharmonic
functions for suitable complex $\lambda$. 

Here we come back to the specific situation of simple random walk on the homogeneous
tree $T$ with degree $q+1$, where $q \ge 2$. The necessary preliminaries are 
outlined in \S \ref{sec:integral}.  For the transition operator $P$ of the 
simple random walk on $T$, we study in more detail the boundary behaviour of 
$\lambda$-polyharmonic functions, that is, $f: T \to \C$ such that $(\lambda\cdot I - P)^n f = 0$.
We assume that $\lambda \in \C \setminus [-\rho\,,\,\rho]$, where $\rho$ is the $\ell^2$-spectral 
radius of $P$ and $[-\rho\,,\,\rho]$ is its $\ell^2$-spectrum. Close to the spirit of 
{\sc Kor\'anyi and Picardello}~\cite{KoPi}, we extend their results 
from $\lambda$-harmonic to $\lambda$-polyharmonic functions, and results of the abovementioned
work \cite{CCGS} from ordinary polyharmonic functions, i.e. $\lambda =1$, to general
complex $\lambda$ in the $\ell^2$-resolvent set of $P$.

First, we consider higher order analogues of the Dirichlet problem at infinity: 
in the classical case $\lambda =1$, one takes any continuous
function $\gh$ on the boundary at infinity $\partial T$ of $T$ and provides
a harmonic function on $T$ which provides a continuous extension of $\gh$
to the compactification $\wh T = T \cup \partial T$. It is given by the (analogue of the)
Poisson transform of $\gh$ with respect to the Martin kernel.

However, for $\lambda$-polyharmonic functions of higher order, as well as for
$\lambda$-harmonic functions with $\lambda \ne 1$, this needs an additional normalisation,
in order to control the Poisson-Martin transforms with respect to the $\lambda$-Martin kernel 
(and its higher order versions) at infinity. The normalisation is by
spherical functions and their higher order analogues, the polyspherical functions.
They are introduced in \S \ref{sec:spherical}, where we also study their
asymptotic behaviour at infinity, see Proposition \ref{pro:asy}.

The first two main results are given by the ``twin'' theorems \ref{thm:laDir} and \ref{thm:laFat} 
in \S \ref{sec:DirRiq}. 
The (analogue of the) Poisson integral of $\gh$ with respect to the $n^{\text{th}}$ 
extension of the $\lambda$-Martin kernel (i.e., the  kernel multiplied by the 
-- suitably normalised --
$n^{\text{th}}$ power of the Busemann function) is polyharmonic of order $n+1$,
and normalised (= divided) by the $n^{\text{th}}$ polyspherical function, it converges to
$\gh$ at the boundary. Next,  Theorem \ref{thm:laFat} concerns  
Fatou type non-tangential convergence of polyharmonic extensions of 
complex Borel measures on the boundary.

In general, the polyharmonic extension of a continuous boundary function cannot be 
unique because one may add lower order polyharmonic functions that do not change the limit.
However, uniqueness is proved in the case of $\lambda$-harmonic functions ($n=1$),
see Theorem \ref{thm:launique}. That is, normalising by the associated spherical
function, the solution of the $\lambda$-Dirichlet problem at infinity is unique. 
Note that since $\lambda$ is in general complex, typical tools from Potential
Theory such as the maximum principle cannot be applied here, and are replaced by a new idea,
using spherical averages.

As a corollary of these results, a tree-counterpart of the \emph{Riquier problem} at infinity is
provided. In the case of a bounded Euclidean domain $D$ as above, this consists
in providing continuous boundary functions $\gh_0\,,\dots, \gh_{n-1}$ 
and looking for a polyharmonic function $f$ of order $n$ 
on $D$ such that $\Delta^k f$ is a continuous extension of $\gh_k$ for each $k$.
For finite graphs, the analogous problem has been studied in a note 
by {\sc Hirschler and Woess}~\cite{HW}, where one can find further references
concerning the discrete setting.
In the case of $\lambda$-harmonic functions on $T$, the formulation of the 
analogous problem requires again suitable normalisation, 
see Definition  \ref{def:Riq} and Corollary \ref{cor:Riq}.

\section{Homogeneous trees and boundary integral representations}\label{sec:integral}

Let $T= T_q$ 
be the homogeneous tree where each vertex has $q+1 \ge 3$ neighbours.
We need some features of its structure and first recall the well known boundary 
$\partial T$ of the tree.  For $x, y \in T$, there is a unique \emph{geodesic path}
$\pi(x,y) = [x=x_0\,,\, x_1\,,\dots, x_n=y]$ of minimal length $n$, such that $x_{k-1} \sim x_k$
for $x = 1, \dots,n$, and $d(x,y)=n$ is the graph distance between $x$ and $y$. 
A \emph{geodesic ray}
is a sequence $[x_0\,,\, x_1\,,x_2\,,...]$ of distinct vertices with $x_{n-1} \sim x_n$.
Two rays are equivalent if they share all but finitely many among their vertices. An 
\emph{end} of $T$ is an equivalence class of geodesic rays, and $\partial T$ is
the set of all ends. For any $\xi \in \partial T$ and $x \in T$, there is a unique
geodesic $\pi(x,\xi)$ which starts at $x$ and represents $\xi$.  
Next, we  choose a root vertex $o \in T$. We set $\wh T = T \cup \partial T$. For any pair of points $z, w \in \wh T$, 
their \emph{confluent} $z \wedge w$ is the last common vertex on the finite or 
infinite geodesics $\pi(o,w)$ an $\pi(o,z)$,  unless $z=w$ is an end, in which 
case $z \wedge z = z$. Furthermore, for a vertex $x \ne o$, we define its
\emph{predecessor} $x^-$ as the neighbour of $x$ on the arc $\pi(o,x)$.

We now equip $\wh T$ with a new metric: we set $|x| = d(x,o)$
for $x \in T$, and let
\begin{equation}\label{eq:theta}
\theta(z,w) = \begin{cases} q^{-|z \wedge w|}\,,&\text{if }\; z \ne w\,,\\
                            0\,,&\text{if }\; z = w\,.
              \end{cases}
\end{equation}
This is an ultra-metric which turns $\wh T$ into a compact space with $T$ as an open,
discrete and dense subset. A basis of the topology is given by all \emph{branches}
$\wh T_{x,y}\,$, where $x, y \in T$ with $x \ne y$. Here, 
$$
\wh T_{x,y} = \{ z \in \wh T : y \in \pi(x,z) \}\,.
$$
This is a compact-open set, and its boundary $\partial T_{x,y} = \wh T_{x,y} \cap \partial T$
is called a \emph{boundary arc}. As a matter of fact, a basis of the topology of
$\partial T$ is given by the collection of all $\partial T_x := \partial T_{o,x}\,$,
including $\partial T_o := \partial T$. A \emph{locally constant function} on $\partial T$
is a finite linear combination
$$
\gh = \sum_{j=1}^n c_j\,\uno_{\partial T_{x_j}}
$$
of indicator functions of boundary arcs. It can equivalently be written in terms of
boundary arcs $\partial T_{x,y_k}$ for any fixed vertex $x$. A \emph{distribution} on
$\partial T$ is an element of the dual of the linear space of locally constant functions.
Equivalently, it can be written as a finitely additive measure $\nu$ on the collection of
all boundary arcs. For this it suffices to consider only the boundary arcs with respect
to $o$, so that $\nu$ is characterised as a set function
\begin{equation}\label{eq:distribution}
\nu: \{ \partial T_x : x \in T \} \to \C \quad \text{with}\quad
\nu(\partial T_x) = \sum_{y: y^- = x} \nu(\partial T_y) \quad \text{for all }\;x\,. 
\end{equation}
For $\gh$ as above, we write $\nu(\gh)$ as an integral
$$
\int_{\partial T} \gh \, d\nu = \sum_{j=1}^n c_j \, \nu(\partial T_{x_j})\,.
$$
When $\nu$ is non-negative real, compactness yields immediately that it
extends to a $\sigma$-additive measure on the Borel $\sigma$-algebra
of $\partial T$. In general, $\nu$ does not necessarily extend to
a $\sigma$-additive complex measure; see {\sc Cohen, Colonna 
and Singman}~\cite{CoCoSi}.

We now turn to harmonic functions.
For a function $f: T \to \C$, we define
$$
Pf(x) = \frac{1}{q+1} \sum_{y: y \sim x} f(y)\,,
$$
where $y \sim x$ means that the vertices $x, y \in T$ are neighbours.
$P$ is the transition operator of the simple random walk on $T$. We recall 
the very well known fact that as a self-adjoint operator on the space
$\ell^2(T)$, its spectrum is the interval $[-\rho\,,\,\rho]$, where
$\rho = 2\sqrt{q}/(q+1)$. In this setting, the discrete counterpart of the
Laplacian is $P-I$, where $I$ is the identity operator.

\begin{dfn}\label{def:poly} For $\lambda \in \C$, a  \emph{$\lambda$-polyharmonic 
function of order $n$} is a function $f:T \to \C$ such that
$(\lambda \cdot I - P)^n f = 0$.

For $n=1$, it is called $\lambda$-harmonic,  and when $\lambda =1$, we speak
of a polyharmonic, resp. harmonic function.
\end{dfn}

Following \cite{PiWo}, for a suitable boundary integral representation, 
the ``eigenvalue'' $\lambda$ should belong to the resolvent set
$\res(P) = \C \setminus [-\rho\,,\,\rho]$ of $P$ on $\ell^2(T)$. In this case, let
$G(x,y|\lambda) = (\lambda \cdot I - P)^{-1} \delta_y(x)$ be
the Green function, that is, the $(x,y)$-matrix element of the resolvent,
where $x,y \in T$. By \cite[Thm. 4.2]{PiWo}, or by direct computation,
$G(x,x|\la) \ne 0$, and we can define $F(x,y|\la) = G(x,y|\la)/G(x,x|\la)$.
These functions depend only on the graph distance $d(x,y)$ between $x$ and $y$.

For $|\la| \ge \rho$, one has a combinatorial-probabilistic interpretation:
\begin{equation}\label{eq:copr}
F(x,y|\la) = \sum_{n=1}^{\infty} f^{(n)}(x,y)/\lambda^n\,,
\end{equation}
where  $f^{(n)}(x,y)$ is the probability that the simple random walk starting
at $x$ hits $y$ at the $n^{\text{th}}$ step for the first time. 
Simple and well-known computations yield
\begin{equation}\label{eq:F}
\begin{gathered}
F(x,y|\lambda) = F(\lambda)^{d(x,y)}, \quad \text{where} \\
F(\lambda) = \frac{q+1}{2q} \bigl(\lambda - s(\lambda)\bigr)
\quad\text{with}\quad s(\lambda) = \lambda \, \sqrt{1- \rho^2/\lambda^2}\,,
\end{gathered}
\end{equation}
see e.g. \cite[Lemma 1.24]{Wbook} (with $z=1/\lambda$). The complex square root is
$\sqrt{re^{i\phi}} = \sqrt{r}e^{i\phi/2}$ for $\phi \in (-\pi\,,\,\pi)$.  

The \emph{$\lambda$-Martin kernel}
on $T \times \partial T$ is 
$$
K(x,\xi|\lambda) = 
\frac{F(x,x\wedge \xi|\lambda)}{F(o,x \wedge \xi|\lambda)} = F(\lambda)^{\hor(x,\xi)}\,,
\quad x \in T\,,\;\xi \in \partial T\,, 
$$
where 
$$
\hor(x,\xi) = d(x,x\wedge \xi) -d(o, x \wedge \xi)
$$
is the \emph{Busemann function} or \emph{horocycle index} of $x$ with respect to the end $\xi$.
Note that for fixed $x$, the function $\xi \mapsto K(x,\xi|\lambda)$ is locally constant.

Now a basic result in the seminal paper of {\sc Cartier}~\cite{Ca}, valid for real 
$\lambda \ge \rho$, and its extension to complex $\lambda \in \res(P)$ \cite{PiWo} 
says the following for simple random walk on $T$. 

For $\lambda \in \C \setminus [-\rho\,,\,\rho]$, 
every $\lambda$-harmonic function
$h$ on $T$ has a unique integral representation
\begin{equation}\label{eq:Cartier}
h(x) = \int_{\partial T}  K(x,\xi|\lambda)\, d\nu(\xi)\,,
\end{equation}
where $\nu$ is a distribution on $\partial T$ as in \eqref{eq:distribution}.
If $\lambda > \rho$ and $h > 0$ then $\nu$ is a positive Borel measure.
Indeed, this holds for arbitrary nearest neighbour random walks on arbitrary
countable trees, and \cite{PiWo} has a method to extend this to a
boundary integral representation of $\lambda$-polyharmonic functions. Specialised
to simple random walk on $T = T_q\,$, this yields the following extension of
a result of \cite{CCGS}, where the basic case $\lambda =1$ is considered.

\begin{theorem}\cite{PiWo} \label{thm:PiWo}
For $\lambda \in \C \setminus [-\rho\,,\,\rho]$, 
every $\lambda$-polyharmonic harmonic function $f$ of order $n$
on $T$ has a unique integral representation
$$
f(x) = \sum_{k=0}^{n-1}\int_{\partial T}  K(x,\xi|\lambda)\,
\hor_k(x,\xi|\lambda)\, d\nu_k(\xi) \quad \text{with}\quad 
\hor_k(x,\xi|\lambda) = \frac{\hor(x,\xi)^k}{k!\,s(\lambda)^k}\,, 
$$
where $\nu_0\,,\dots, \nu_{n-1}$ are distributions on $\partial T$.
\end{theorem}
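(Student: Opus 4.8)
The plan is to prove existence and uniqueness of the representation simultaneously, by induction on the order $n$; the base case $n=1$ is exactly Cartier's representation \eqref{eq:Cartier}, which already includes uniqueness among distributions. The inductive engine is an algebraic \emph{lowering identity} for how $\la\cdot I-P$ acts on the building blocks $x\mapsto K(x,\xi|\la)\,\hor_k(x,\xi|\la)$. I would first establish that, for fixed $\xi\in\partial T$ and $k\ge1$,
\[
(\la\cdot I-P)\bigl(K(\cdot,\xi|\la)\,\hor_k(\cdot,\xi|\la)\bigr)
=K(\cdot,\xi|\la)\,\hor_{k-1}(\cdot,\xi|\la)+\sum_{j=0}^{k-2}a_{k,j}(\la)\,K(\cdot,\xi|\la)\,\hor_j(\cdot,\xi|\la)
\]
for suitable scalars $a_{k,j}(\la)$ (depending only on $q,k,\la$), while $(\la\cdot I-P)K(\cdot,\xi|\la)=0$. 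This is a direct computation: for a vertex $x$ exactly one neighbour $y$ has $\hor(y,\xi)=\hor(x,\xi)-1$ and the other $q$ have $\hor(y,\xi)=\hor(x,\xi)+1$, so $K(y,\xi|\la)=F(\la)^{\pm1}K(x,\xi|\la)$; expanding $(\hor(x,\xi)\pm1)^k$ binomially and sorting by powers of $\hor(x,\xi)$, the top power cancels by $\la$-harmonicity of $K(\cdot,\xi|\la)$ (equivalently $F(\la)^{-1}+qF(\la)=(q+1)\la$), the next power produces precisely $K\hor_{k-1}$ owing to the normalising factor $1/\bigl(k!\,s(\la)^k\bigr)$ together with the companion identity $F(\la)^{-1}-qF(\la)=(q+1)s(\la)$ --- both identities being immediate from the quadratic satisfied by $F(\la)$ in \eqref{eq:F} --- and the remaining powers supply the corrections. (Conceptually, the normalisation is forced because $K(\cdot,\xi|\la)$ is $\la$-harmonic and $\partial_\la F(\la)=-F(\la)/s(\la)$, so the $\la$-derivatives $\partial_\la^kK(\cdot,\xi|\la)$ are $\la$-polyharmonic of order $k+1$ and expand triangularly in the $K\hor_j$.) I would also record that, for fixed $x$, the function $\xi\mapsto K(x,\xi|\la)\hor_k(x,\xi|\la)$ is locally constant, constant on each of the finitely many level sets $\{\xi:\hor(x,\xi)=h\}$, each a finite union of boundary arcs; hence every integral $\int_{\partial T}K(x,\xi|\la)\hor_k(x,\xi|\la)\,d\nu(\xi)$ is a well-defined finite sum for every distribution $\nu$, with no convergence issue.

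For existence, let $f$ be $\la$-polyharmonic of order $n$, so $g:=(\la\cdot I-P)f$ is of order $n-1$; by the induction hypothesis $g=\sum_{k=0}^{n-2}\int_{\partial T}K(\cdot,\xi|\la)\hor_k(\cdot,\xi|\la)\,d\mu_k(\xi)$. Put $f_0:=\sum_{k=0}^{n-2}\int_{\partial T}K(\cdot,\xi|\la)\hor_{k+1}(\cdot,\xi|\la)\,d\mu_k(\xi)$; the lowering identity gives $(\la\cdot I-P)f_0=g+r$, where $r$ is a finite combination of integrals of $K\hor_j$ with $j\le n-3$, hence $\la$-polyharmonic of order $\le n-2$. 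Representing $r$ via the induction hypothesis and applying the same correction repeatedly yields a finite descent --- the remainder's top index drops each time, so the process stops after at most $n$ steps --- ending in a function $\tilde f$ that is a finite sum of terms $\int_{\partial T}K(\cdot,\xi|\la)\hor_k(\cdot,\xi|\la)\,d(\cdot)$ with $0\le k\le n-1$ and satisfies $(\la\cdot I-P)\tilde f=g$; collecting by $k$, write $\tilde f=\sum_{k=0}^{n-1}\int_{\partial T}K(\cdot,\xi|\la)\hor_k(\cdot,\xi|\la)\,d\rho_k(\xi)$. Then $f-\tilde f$ is $\la$-harmonic, so by \eqref{eq:Cartier} it equals $\int_{\partial T}K(\cdot,\xi|\la)\,d\rho_0'(\xi)$ for some distribution $\rho_0'$, and replacing $\rho_0$ by $\rho_0+\rho_0'$ presents $f$ in the required form.

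For uniqueness, suppose $\sum_{k=0}^{n-1}\int_{\partial T}K(\cdot,\xi|\la)\hor_k(\cdot,\xi|\la)\,d\nu_k(\xi)\equiv0$. Applying $\la\cdot I-P$ and the lowering identity, only the summand $k=n-1$ contributes to the top index $n-2$, and with coefficient $1$; thus $\sum_{k=0}^{n-2}\int_{\partial T}K(\cdot,\xi|\la)\hor_k(\cdot,\xi|\la)\,d\nu_k''(\xi)\equiv0$ with $\nu_{n-2}''=\nu_{n-1}$. By the induction hypothesis all $\nu_k''$ vanish, so $\nu_{n-1}=0$, and the original identity collapses to $\sum_{k=0}^{n-2}\int_{\partial T}K\hor_k\,d\nu_k\equiv0$, which the induction hypothesis again kills; the base case $n=1$ is the uniqueness clause of \eqref{eq:Cartier}. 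The main obstacle is precisely the interaction in the existence step: since $\la\cdot I-P$ moves the family $\{K\hor_k\}_k$ only triangularly, not by a clean shift $k\mapsto k-1$, one cannot simply ``antidifferentiate'' the representation of $g$; organising the correction terms --- repeatedly feeding the intermediate remainders back into the induction hypothesis while keeping the indices in range --- is where the work lies, and the lowering identity itself, though only a binomial computation, must be handled with care since the normalisation $\hor_k=\hor^k/\bigl(k!\,s(\la)^k\bigr)$ is exactly what pins the coefficient of $K\hor_{k-1}$ to $1$.
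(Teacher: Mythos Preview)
Your argument is correct. Note, however, that the present paper does not give its own proof of this theorem: it is imported from \cite{PiWo}, and only a fragment of that argument is recalled later, around \eqref{eq:derive}--\eqref{eq:rec} and Lemma~\ref{lem:new}. The method sketched there is slightly different in spirit from yours: in \cite{PiWo} one differentiates the eigenvalue identity $(\la\cdot I-P)K(\cdot,\xi|\la)=0$ with respect to $\la$ to obtain $\frac{(-1)^n}{n!}(\la\cdot I-P)^nK^{(n)}(\cdot,\xi|\la)=K(\cdot,\xi|\la)$, so that the $\la$-derivatives $K^{(n)}$ are automatically $\la$-polyharmonic of order $n+1$; one then expands $K^{(n)}$ triangularly in the family $\{K\hor^k\}_k$ via \eqref{eq:rec}, and the basis change between $\{K^{(k)}\}$ and $\{K\hor_k\}$ transfers Cartier's representation to the polyharmonic one.

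Your route bypasses the analytic dependence on $\la$ entirely: you establish the lowering identity for $(\la\cdot I-P)$ on $K\hor_k$ by a direct binomial computation at a single fixed $\la$, and then run an induction with explicit bookkeeping of the lower-order correction terms. This is more elementary---no derivatives, no holomorphy in $\la$---and makes transparent why the normalisation $1/\bigl(k!\,s(\la)^k\bigr)$ forces the leading coefficient to be $1$; indeed, iterating your lowering identity $n$ times recovers precisely the paper's Lemma~\ref{lem:new}. The \cite{PiWo} approach, on the other hand, gets polyharmonicity of the building blocks for free from differentiation and concentrates the work in a single triangular change of basis, which is arguably cleaner for the existence half. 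Both arguments rest on the same linear-algebraic fact: the operator $\la\cdot I-P$ acts on $\operatorname{span}\{K\hor_k:k\ge0\}$ as a nilpotent shift with unit superdiagonal.
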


The normalisation by $k!\,s(\lambda)^k$, where $s(\lambda)$ is as in
\eqref{eq:F}, is not present in \cite[Cor. 5.4]{PiWo}. We shall see below in
Lemma \ref{lem:new} 
why it is useful.

\section{Polyspherical functions}\label{sec:spherical}

\begin{dfn}\label{def:sph}
For any $\lambda \in \C$, the \emph{spherical function}
$\Phi(x|\lambda)$ is the unique function on $T$ with $\Phi(o|\lambda)=1$
which is $\lambda$-harmonic and \emph{radial,} i.e., it depends only on $|x|= d(o,x)$.
\end{dfn}

Namely, if we set $\varphi_k(\lambda) = \Phi(x|\lambda)$ for $|x|=k$, then 
we have the recursion 
$$
\varphi_0(\lambda)=1\,,\quad \varphi_1(\lambda) = \lambda\,,\AND
\lambda\,\varphi_k(\lambda) = \frac{1}{q+1}\,\varphi_{k-1}(\lambda) 
+ \frac{q}{q+1}\,\varphi_{k+1}(\lambda) \quad \text{for }\; k \ge 1\,. 
$$
We shall consider the case when $\lambda$ is in the $\ell^2$-resolvent set of $P$, 
that is, $\lambda \in \C \setminus [\rho\,,\,\rho]$. 
Let $F(\la)$ be as in \eqref{eq:F}, and let 
\begin{equation}\label{eq:oF}
 \Fc(\lambda) = \frac{q+1}{2q} \bigl(\lambda + s(\lambda)\bigr)
\end{equation}
be the second solution, besides $F(\lambda)$, of the equation 
\begin{equation}\label{eq:qeq}
\lambda\,F(\lambda) = \frac{1}{q+1}
+ \frac{q}{q+1}F(\lambda)^2\,.
\end{equation}
Then one can solve the above recursion, and 
\begin{equation}\label{eq:solverec}
\begin{gathered} 
\Phi(x|\lambda) 
= a(\lambda)\,F(\lambda)^{|x|} + \ac(\lambda)\, \Fc(\lambda)^{|x|}\,,\quad \text{where}\\
a(\lambda) = \frac{s(\lambda)-\frac{q-1}{q+1}\lambda}{2s(\lambda)}
   \AND \ac(\lambda) = \frac{s(\lambda)+\frac{q-1}{q+1}\lambda}{2s(\lambda)}\,.
\end{gathered}
\end{equation}
We collect a few elementary properties.
\begin{lem}\label{lem:properties} We have for $\lambda \in \C \setminus [\rho\,,\,\rho]$
$$
0 < |F(\lambda)| < 1\big/\sqrt{q} < |\Fc(\lambda)|\,, \quad F(1) = 1/q\,, \AND \Fc(1)=1.  
$$ 
Furthermore, 
$$
\Phi(x|1) = 1 \AND \Phi(x|\lambda) \ne 0 \quad \text{for all }\; x \in T.
$$
\end{lem}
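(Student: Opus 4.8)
The plan is to reduce everything to the single identity $F(\lambda)\Fc(\lambda)=1/q$ together with the sign of one square root. For the identity: by \eqref{eq:F} one has $s(\lambda)^2=\lambda^2-\rho^2$, and $F(\lambda),\Fc(\lambda)$ are the two roots of the quadratic \eqref{eq:qeq}, so their product equals $\bigl(\tfrac{q+1}{2q}\bigr)^2\bigl(\lambda^2-s(\lambda)^2\bigr)=\bigl(\tfrac{q+1}{2q}\bigr)^2\rho^2=\tfrac1q$. In particular $F(\lambda)\neq0$, so $|F(\lambda)|>0$, and in view of $|F(\lambda)|\,|\Fc(\lambda)|=1/q$ the whole chain $|F(\lambda)|<1/\sqrt q<|\Fc(\lambda)|$ is equivalent to the single inequality $|F(\lambda)|<|\Fc(\lambda)|$. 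Writing $u=u(\lambda)=s(\lambda)/\lambda=\sqrt{1-\rho^2/\lambda^2}$ with the principal branch (note $\lambda\in\C\setminus[-\rho,\rho]$ guarantees $\lambda\neq0\neq s(\lambda)$), we have $F(\lambda)/\Fc(\lambda)=(1-u)/(1+u)$; since $|1+u|^2-|1-u|^2=4\operatorname{Re}u$, the inequality $|F(\lambda)|<|\Fc(\lambda)|$ is in turn equivalent to $\operatorname{Re}u>0$.

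So the heart of the matter is to show $\operatorname{Re}\sqrt{1-\rho^2/\lambda^2}>0$ for all $\lambda\in\C\setminus[-\rho,\rho]$, and this is the only delicate point; the rest is bookkeeping. With the stated branch one has $\operatorname{Re}\sqrt z>0$ for every $z\in\C\setminus(-\infty,0]$: writing $z=re^{i\phi}$ with $\phi\in(-\pi,\pi)$ gives $\operatorname{Re}\sqrt z=\sqrt r\cos(\phi/2)>0$. Hence it suffices to check that $1-\rho^2/\lambda^2$ never lies in $(-\infty,0]$ on the resolvent set; but $1-\rho^2/\lambda^2\le0$ would force $\rho^2/\lambda^2$ to be real and $\ge1$, hence $\lambda^2$ real with $0<\lambda^2\le\rho^2$, i.e. $\lambda\in[-\rho,\rho]$ — excluded. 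This establishes $0<|F(\lambda)|<1/\sqrt q<|\Fc(\lambda)|$.

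For $\lambda=1$: $s(1)=\sqrt{1-\rho^2}=(q-1)/(q+1)$ (the ordinary positive square root, legitimate since $\rho<1$ for $q\ge2$), and substituting into \eqref{eq:F} and \eqref{eq:oF} gives $F(1)=1/q$ and $\Fc(1)=1$. The constant function $\uno$ is radial and satisfies $P\uno=\uno$, hence is $1$-harmonic, so the uniqueness in Definition \ref{def:sph} forces $\Phi(x|1)\equiv1$ (equivalently, $a(1)=0$ and $\ac(1)=1$ in \eqref{eq:solverec}). Finally, for the non-vanishing of $\Phi$, fix $x$ with $|x|=k$; the case $k=0$ is trivial, so assume $k\ge1$ and $\Phi(x|\lambda)=a(\lambda)F(\lambda)^k+\ac(\lambda)\Fc(\lambda)^k=0$, i.e. $|a(\lambda)|\,|F(\lambda)|^k=|\ac(\lambda)|\,|\Fc(\lambda)|^k$. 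A short computation from \eqref{eq:solverec}, using $s(\lambda)\overline{\lambda}=|\lambda|^2u$, yields
$$
|\ac(\lambda)|^2-|a(\lambda)|^2=\frac{q-1}{q+1}\cdot\frac{|\lambda|^2\operatorname{Re}u}{|s(\lambda)|^2}>0,
$$
so $|\ac(\lambda)|>|a(\lambda)|\ge0$. If $a(\lambda)=0$, the displayed equality forces $\ac(\lambda)\Fc(\lambda)^k=0$, impossible since $\ac(\lambda),\Fc(\lambda)\neq0$; if $a(\lambda)\neq0$, it gives $\bigl(|F(\lambda)|/|\Fc(\lambda)|\bigr)^k=|\ac(\lambda)|/|a(\lambda)|>1$, contradicting $|F(\lambda)|<|\Fc(\lambda)|$. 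Hence $\Phi(x|\lambda)\neq0$ for every $x\in T$. The main obstacle, as noted, is purely the branch-of-the-square-root analysis in the second paragraph; once $\operatorname{Re}u>0$ is secured, all remaining assertions are elementary.
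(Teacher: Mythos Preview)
Your proof is correct, and it takes a genuinely different route from the paper's.

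The paper establishes $|F(\lambda)|<1/\sqrt{q}$ in three separate regimes: for $|\lambda|>\rho$ it invokes the probabilistic series \eqref{eq:copr} to get $|F(\lambda)|\le F(|\lambda|)<F(\rho)$; for $|\lambda|=\rho$, $\lambda\ne\pm\rho$, it argues similarly; and for $\lambda$ inside the disk but off the real segment it appeals to the Maximum Modulus Principle applied to $F$ on the upper and lower open half-disks. The same case split, including another appeal to the Maximum Modulus Principle (this time applied to $a/\ac$), is repeated for the inequality $|a(\lambda)|<|\ac(\lambda)|$. By contrast, you reduce both inequalities to the single statement $\operatorname{Re}u>0$ with $u=\sqrt{1-\rho^2/\lambda^2}$, and settle that once and for all by observing that $1-\rho^2/\lambda^2\in(-\infty,0]$ forces $\lambda\in[-\rho,\rho]$. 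Your explicit identity $|\ac|^2-|a|^2=\tfrac{q-1}{q+1}\,|\lambda|^2\operatorname{Re}u\,/\,|s|^2$ then dispatches the coefficient comparison without further casework. The payoff is a unified argument that is more elementary (no Maximum Modulus Principle, no appeal to the random-walk interpretation) and treats the whole resolvent set at once; the paper's approach, on the other hand, makes the connection to the underlying probability more visible in the region $|\lambda|>\rho$.
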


\begin{proof}
 First of all, by \eqref{eq:qeq}, $F(\lambda) \ne 0$ and $F(\lambda)\Fc(\lambda) = 1/q$. 
Next, by \eqref{eq:copr}, $|F(\lambda)| \le F(|\lambda|) < F(\rho) = 
1\big/\sqrt{q}\,$ for $|\lambda| > \rho$. Also when $|\lambda| = \rho$ and $\la \ne \pm \rho$,
we have $|F(\lambda)| < F(\rho) = 1\big/\sqrt{q}\,$. At last, for $\lambda$ in the real interval
$(-\rho\,,\rho)$, the limits of $F(\cdot)$ are 
$$
\frac{q+1}{2q} \bigl(\lambda \pm \im \sqrt{\rho^2 - \lambda^2}\bigr),
$$
according to whether $\lambda$ is approached within the upper or lower half plane.
Thus, in the upper open semidisk $\{ z \in \C : |z| < \rho\,,\; \Re z > 0 \}$, 
as well as in the corresponding lower open semidisk, $F(\lambda)$ is analytic, and 
its absolute values at the boundary are $\le 1\big/\sqrt{q}\,$. 
By the Maximum Modulus Principle, $|F(\lambda)| <1/\sqrt{q}\,$ within each of those
two semidisks.  We see that the last inequality holds in all of $\C \setminus [\rho\,,\,\rho]$.

Consequently, $|q \Fc(\lambda)| = 1/|F(\lambda)| > \sqrt{q}$.
The values for $\lambda=1$ are obvious. 

Finally, we claim that for the coefficient functions in 
\eqref{eq:solverec} one has $|a(\lambda)| < |\ac(\lambda)|$. For $|\lambda| > \rho$, as well as 
for $|\lambda| = \rho$ and $\lambda \ne \pm \rho$,  one can see this
from the fact that $1- \rho^2/\lambda^2$ belongs to the complex half-plane with
positive real part. For $\lambda$ in one of the above two semidisks, one can proceed as above:
one checks that $\ac(\lambda) \ne 0$. Then the function $a(\lambda)/\ac(\lambda)$ is analytic
in each semidisk, with boundary values whose absolute values are $\le 1$, and the 
desired inequality follows. 
Therefore 
$$
\bigl|a(\lambda)\,F(\lambda)^{|x|}\bigr| < \bigl|\ac(\lambda)\, \Fc(\lambda)^{|x|}\bigr|
$$
for every $x \in T$, and $\Phi(x|\lambda) \ne 0$.
\end{proof}

We can describe the spherical functions via their integral representation \eqref{eq:Cartier}.
Let $\mm$ stand for the \emph{uniform distribution} on $\partial T$. This is the Borel probability 
measure which for each $k \in \N_0$ assigns equal mass to all boundary arcs $\partial T_x\,$,
where $x \in T$ with $|x|=k$. That is, 
$$
\mm(\partial T_x) 
= \begin {cases} 1\,,&\text{if }\; x = o\,,\\
                 1\big/\bigl((q+1)q^{|x|-1}\bigr)\,, &\text{if }\; x \ne o\,.
\end{cases}
$$
We shall often write $d\mm(\xi) = d\xi\,$. Then
\begin{equation}\label{eq:sph}
\Phi(x|\lambda) = \int_{\partial T} K(x,\xi|\lambda)\, d\xi\,. 
\end{equation}
Indeed, the right hand side satisfies all requirements of Definition \ref{def:sph},
which determine the spherical function. A comparison with Theorem \ref{thm:PiWo}
leads us to the following.

\begin{dfn}\label{def:polysph}
For $n \ge 0$,  the \emph{$n^{\text{th}}$ polyspherical function} is
$$
\Phi_n(x|\lambda) = \int_{\partial T}  K(x,\xi|\lambda)\,\hor_n(x,\xi|\lambda)
\,  d\xi\,.
$$
\end{dfn}
It is $\lambda$-polyharmonic of order $n+1$, and it is radial. With respect to those two properties,
it is uniquely determined by its values for $|x|=0, 1, \dots, n$. For $n \ge 1$, 
its value at $x=o$ is $0$.
For $n=0$ it is of course the spherical function  \eqref{eq:sph}.

In particular, $(\lambda\cdot I - P)^n\, \Phi_n(\cdot|\lambda)$ is $\lambda$-harmonic and
radial, so that it must be a multiple of $\Phi(\cdot|\lambda)$. In order to determine
the factor, we need to recall part of how Theorem \ref{thm:PiWo} was obtained in \cite{PiWo}.
Let $K^{(n)}(x,\xi|\lambda)$ be the $n^{\textrm{th}}$ derivative of $K(x,\xi|\lambda)$ with
respect to $\lambda$. Then 
\begin{equation}\label{eq:derive}
\frac{(-1)^n}{n!}(\lambda\cdot I - P)^n K^{(n)}(\cdot,\xi|\lambda) = K(\cdot,\xi|\lambda)\,.
\end{equation}
In \cite[equation (5.2)]{PiWo}, it is shown that 
\begin{equation}\label{eq:rec}
K^{(n)}(x,\xi|\la) = K(x,\xi|\la)\, \sum_{k=1}^n \hor(x,\xi)^k \, \ff_{k,n}(\la)\,,
\end{equation}
where the functions $\ff_{k,n}(\lambda)$ are given recursively; in particular, with 
$s(\lambda)$ as in \eqref{eq:F},
$$
\ff_{n,n}(\lambda) = (-1)^n s(\la)^{-n} \,.
$$
Combining \eqref{eq:derive} and  \eqref{eq:rec}, we get
\begin{lem}\label{lem:new} \hspace*{1.3cm}
$(\lambda\cdot I - P)^n 
\bigl[K(\cdot,\xi|\lambda)\, \hor_n(x,\xi|\lambda) \bigr] = K(\cdot,\xi|\lambda)\,.$ 
\end{lem}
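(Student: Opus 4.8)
Here is the plan. Throughout, fix $\xi\in\partial T$ and write $K=K(\cdot,\xi|\lambda)$, $\hor=\hor(\cdot,\xi)$, and $K^{(j)}=K^{(j)}(\cdot,\xi|\lambda)$ for the $j$-th $\lambda$-derivative. The plan is to feed the two displayed identities \eqref{eq:derive} and \eqref{eq:rec} into one another, the crucial point being that the functions $K,K^{(1)},\dots,K^{(n)}$ span, over $\C$, the same linear space as $K,K\hor,\dots,K\hor^{\,n}$, so that ``lower order'' terms are automatically annihilated by $(\lambda\cdot I-P)^n$.

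First I would record that $K^{(j)}$ is $\lambda$-polyharmonic of order $j+1$. Indeed \eqref{eq:derive} reads $(\lambda\cdot I-P)^jK^{(j)}=(-1)^j j!\,K$, and applying $(\lambda\cdot I-P)$ once more annihilates the right-hand side since $K$ is $\lambda$-harmonic. In particular, for every $j\le n-1$ we have $(\lambda\cdot I-P)^nK^{(j)}=0$. Next, \eqref{eq:rec} writes $K^{(j)}=K\sum_{k=1}^{j}\hor^k\,\ff_{k,j}(\lambda)$, and since $\ff_{j,j}(\lambda)=(-1)^j s(\lambda)^{-j}\ne 0$ on $\C\setminus[-\rho,\rho]$ (note $s(\lambda)\ne 0$ there, as $s$ vanishes only at $0$ and $\pm\rho$), the change of basis from $\{K,K^{(1)},\dots,K^{(n-1)}\}$ to $\{K,K\hor,\dots,K\hor^{\,n-1}\}$ is triangular with nonzero diagonal entries. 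Hence each $K\hor^{\,k}$ with $k\le n-1$ lies in the linear span of $K,K^{(1)},\dots,K^{(n-1)}$, and is therefore killed by $(\lambda\cdot I-P)^n$ by the previous observation. (Equivalently, one can prove ``$K\hor^{\,k}$ is $\lambda$-polyharmonic of order $\le k+1$'' by induction on $k$, solving for $K\hor^{\,k}$ in \eqref{eq:rec} using $\ff_{k,k}(\lambda)\ne 0$.)

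Finally I would apply $(\lambda\cdot I-P)^n$ to \eqref{eq:rec} in the case $j=n$: by the step above all terms with $k\le n-1$ drop out, so
$$
(\lambda\cdot I-P)^nK^{(n)}=\ff_{n,n}(\lambda)\,(\lambda\cdot I-P)^n\bigl[K\,\hor^{\,n}\bigr].
$$
On the other hand \eqref{eq:derive} gives $(\lambda\cdot I-P)^nK^{(n)}=(-1)^n n!\,K$, so with $\ff_{n,n}(\lambda)=(-1)^n s(\lambda)^{-n}$ this yields $(\lambda\cdot I-P)^n[K\,\hor^{\,n}]=n!\,s(\lambda)^n\,K$. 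Dividing by $n!\,s(\lambda)^n$ and recalling $\hor_n(\cdot,\xi|\lambda)=\hor^{\,n}/(n!\,s(\lambda)^n)$ gives exactly $(\lambda\cdot I-P)^n\bigl[K(\cdot,\xi|\lambda)\,\hor_n(\cdot,\xi|\lambda)\bigr]=K(\cdot,\xi|\lambda)$, as claimed. I do not expect a genuine obstacle: the only point requiring care is the span/triangularity argument isolating the top-order term, and this is immediate from \eqref{eq:derive} and \eqref{eq:rec}; the rest is bookkeeping of the constant $(-1)^n n!/\ff_{n,n}(\lambda)$, which is precisely designed to match the normalisation $k!\,s(\lambda)^k$ in $\hor_k$.
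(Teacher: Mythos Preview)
Your proof is correct and follows exactly the route the paper indicates: the paper simply says ``Combining \eqref{eq:derive} and \eqref{eq:rec}, we get'' the lemma, and you have spelled out precisely that combination, in particular the triangular change-of-basis argument showing that the lower-order terms $K\hor^{\,k}$ with $k\le n-1$ are annihilated by $(\lambda\cdot I-P)^n$. There is nothing to add.
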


Integrating with respect to $d\xi$, we also obtain the following.
\begin{equation}\label{eq:sphn} 
(\lambda\cdot I - P)^n \Phi_n(\cdot|\lambda) =  \Phi(\cdot|\lambda)\,.
\end{equation}

We shall need the asymptotic behaviour of $\Phi_n(x|\la)$ as $|x| \to \infty$. 

\begin{pro}\label{pro:asy}
Let $\lambda \in \C$ with $|\lambda| > \rho$. Then, as $|x| \to \infty$,
$$
\Phi_n(x|\lambda) 
\sim \ac(\lambda)\, \frac{(-1)^n\, |x|^n}{n! \, s(\lambda)^n}\, \Fc(\lambda)^{|x|}\,,
$$ 
with $\ac(\lambda)$ given by \eqref{eq:solverec}. 
In particular, in the standard case $\lambda =1$, we have $\Fc(1) = \ac(1) = 1$.

Therefore there is $R = R_{n,\lambda} > 0$ such that 
$$
\Phi_n(x|\lambda) \ne 0  \AND |\Phi_n(x|\lambda)| \le 
2 |\ac(\lambda)|\, \frac{|x|^n}{n! \, |s(\lambda)|^n}\, |\Fc(\lambda)|^{|x|}
\quad \text{for }\; |x| \ge R\,.
$$ 
Furthermore,
\begin{equation}\label{eq:asy}
\lim_{|x| \to \infty} \frac{\Phi_k(x|\lambda)}{\Phi_n(x|\lambda)} = 0 
\quad \text{for }\; k < n\,.
\end{equation}
\end{pro}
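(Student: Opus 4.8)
The plan is to turn the defining integral in Definition~\ref{def:polysph} into an explicit finite sum and then read off its asymptotics. Fix $x$ with $m := |x|$ and write $\pi(o,x) = [o = x_0, x_1, \dots, x_m = x]$. Classifying ends $\xi \in \partial T$ by the confluent $x \wedge \xi$, the set $A_j := \{\xi : x\wedge\xi = x_j\}$ satisfies $\hor(x,\xi) = m - 2j$ on $A_j$, and a direct computation with the uniform distribution gives $\mm(A_0) = \tfrac{q}{q+1}$, $\mm(A_j) = \tfrac{q-1}{(q+1)q^j}$ for $1 \le j \le m-1$, and $\mm(A_m) = \tfrac{1}{(q+1)q^{m-1}}$. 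Since $K(x,\xi|\lambda) = F(\lambda)^{\hor(x,\xi)}$ and $\hor_n(x,\xi|\lambda) = \hor(x,\xi)^n/(n!\,s(\lambda)^n)$, this yields
$$
\Phi_n(x|\lambda) = \frac{1}{n!\,s(\lambda)^n}\left[\frac{q}{q+1}F(\lambda)^m m^n + \frac{q-1}{q+1}\sum_{j=1}^{m-1}\frac{F(\lambda)^{m-2j}}{q^j}(m-2j)^n + \frac{F(\lambda)^{-m}(-m)^n}{(q+1)q^{m-1}}\right].
$$

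Next I would extract the factor $\Fc(\lambda)^m$, using $F(\lambda)\Fc(\lambda) = 1/q$ from Lemma~\ref{lem:properties} (so $\Fc(\lambda) = 1/(qF(\lambda))$, and the ratio $w := qF(\lambda)^2 = F(\lambda)/\Fc(\lambda)$ satisfies $|w| < 1$). After the substitution $k = m-j$ in the middle sum, the three terms become $\tfrac{q}{q+1}w^m m^n\,\Fc(\lambda)^m$, $\tfrac{q-1}{q+1}\Fc(\lambda)^m\sum_{k=1}^{m-1}w^k(2k-m)^n$, and $\tfrac{q}{q+1}(-1)^n m^n\,\Fc(\lambda)^m$. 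The first of these is $o(1)\cdot\Fc(\lambda)^m$ and can be discarded. For the middle one, expanding $(2k-m)^n = \sum_i \binom{n}{i}(2k)^i(-m)^{n-i}$ and using $\sum_{k=1}^{m-1}k^i w^k = \sum_{k=1}^{\infty}k^i w^k + O(m^i|w|^m)$ shows that the $i\ge 1$ terms contribute $O(m^{n-1})$, while the $i=0$ term contributes $(-1)^n m^n\,\tfrac{w}{1-w} + O(m^{n-1})$. Hence the bracket equals $(-1)^n m^n\bigl(\tfrac{q-1}{q+1}\tfrac{w}{1-w} + \tfrac{q}{q+1}\bigr) + O(m^{n-1})$.

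The remaining point is the algebraic identity $\tfrac{q-1}{q+1}\tfrac{w}{1-w} + \tfrac{q}{q+1} = \ac(\lambda)$. From $\Fc(\lambda) - F(\lambda) = \tfrac{(q+1)s(\lambda)}{q}$ one gets $1 - w = \tfrac{(q+1)s(\lambda)}{q\Fc(\lambda)}$, hence $\tfrac{w}{1-w} = \tfrac{qF(\lambda)}{(q+1)s(\lambda)}$; substituting $F(\lambda) = \tfrac{q+1}{2q}(\lambda - s(\lambda))$ and simplifying reproduces exactly $\ac(\lambda) = \bigl(s(\lambda) + \tfrac{q-1}{q+1}\lambda\bigr)/\bigl(2s(\lambda)\bigr)$. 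This gives
$$
\Phi_n(x|\lambda) = \frac{\Fc(\lambda)^m}{n!\,s(\lambda)^n}\bigl((-1)^n \ac(\lambda)\,m^n + O(m^{n-1})\bigr),
$$
which is the asserted asymptotic equivalence, because $\ac(\lambda) \ne 0$ (shown in the proof of Lemma~\ref{lem:properties}) and $s(\lambda) \ne 0$ for $|\lambda| > \rho$. For $\lambda = 1$ one has $\Fc(1) = 1$ and $s(1) = \tfrac{q-1}{q+1}$, whence $\ac(1) = 1$. The existence of $R_{n,\lambda}$ is then immediate (for large $m$ the parenthesis above lies within a factor $2$, in modulus, of $(-1)^n\ac(\lambda)m^n$ and is nonzero), and \eqref{eq:asy} follows by dividing the asymptotic formulas for $\Phi_k$ and $\Phi_n$ and using $m^{k-n} \to 0$ for $k < n$.

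I expect the main obstacle to be the bookkeeping that isolates the genuine leading term $(-1)^n\ac(\lambda)m^n\Fc(\lambda)^m$ from the errors — checking that both the $i \ge 1$ pieces of the binomial expansion and the tails $\sum_{k \ge m}k^i w^k$ really are of lower order — together with carrying out the identity $\tfrac{q-1}{q+1}\tfrac{w}{1-w} + \tfrac{q}{q+1} = \ac(\lambda)$ without sign or factor slips.
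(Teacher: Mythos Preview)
Your proof is correct and follows essentially the same route as the paper: both decompose $\partial T$ into the level sets $A_j=\{\xi:|x\wedge\xi|=j\}$, compute their $\mm$-masses, convert the integral to the same finite sum, factor out $\Fc(\lambda)^{|x|}$ via $F(\lambda)\Fc(\lambda)=1/q$, and identify the limiting constant as $\ac(\lambda)$. The only cosmetic difference is in the tail estimate: the paper splits the sum at $k=\sqrt{|x|}$ and uses that $\bigl((|x|-2k)/|x|\bigr)^n\to1$ uniformly on the short range, whereas you expand $(2k-m)^n$ binomially and bound each term separately---both give the same $O(m^{n-1})$ error.
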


\begin{proof} By Lemma \ref{lem:properties},
$$
| q \, \Fc(\lambda)^2| > 1 \quad \text{for} \quad |\lambda| > \rho.
$$
Now let $x \in T \setminus \{o\}$. For $\ell \in \{ 0, 1, \dots, |x|\}$,
let $A_{\ell} = \{ \xi \in \partial T : |x \wedge \xi| = \ell \}$, and set 
$\mm_{\ell} = m(A_{\ell})$. Then 
$$
K(x,\xi|\la) = F(\la)^{|x|-2\ell}  
 \; \text{ for }
\xi \in A_{\ell}\,,\AND \mm_{\ell} 
= \begin{cases} \dfrac{q}{q+1}&\text{for}\;\ell =0\,,\\[10pt]
                \dfrac{q-1}{(q+1)q^{\ell}}&\text{for}\;\ell =1,\dots, |x|-1\,,\\[10pt]
                \dfrac{1}{(q+1)q^{|x|-1}}&\text{for}\;\ell =|x|\,.
  \end{cases}
$$
We use $F(\lambda) = \bigl(q \Fc(\lambda)\bigr)^{-1}$ and set $k=|x|-\ell$. 
Then the integral formula of Definition \ref{def:polysph} translates into
$$
\begin{aligned}
&n! \, s(\lambda)^n\,\Phi_n(x|\lambda) = \sum_{\ell=0}^{|x|}  F(\la)^{|x|-2\ell} \,\bigl(|x|-2\ell\bigr)^n\,\mm_{\ell}\\
&=  \frac{q}{q+1} \, \bigl(-|x|\bigr)^n\, \Fc(\lambda)^{|x|} \left(1 + 
\frac{q-1}{q}\sum_{k=1}^{|x|-1} \bigl( q\,\Fc(\lambda)^2\bigr)^{-k} 
\Bigl(\frac{|x|-2k}{|x|}\Bigr)^{\!n}
+ (-1)^n \bigl( q\,\Fc(\lambda)^2\bigr)^{-|x|}\right).
\end{aligned}
$$
The last term within the big parentheses tends to $0$ as $|x| \to \infty\,$.
Decompose the sum into the two pieces where in the first one, summation is over
$1 \le k \le \sqrt{|x|}$ and in the second one, summation is over $k > \sqrt{|x|}$.
Then the second part is a remainder of a convergent series, so that it also tends to $0$
as $|x| \to \infty$. Now, in the range $1 \le k \le \sqrt{|x|}$, the quotients 
$(|x|-2k)/|x|$ tend to $1$ uniformly as $|x| \to \infty$. Therefore the first part of the sum
converges to
$$
\frac{q-1}{q} \sum_{k=1}^{\infty} \bigl( q\,\Fc(\lambda)\bigr)^{-k}  
= \frac{q-1}{q} \frac{1}{q\,\Fc(\lambda)^2-1}\,,
$$
as $|x| \to \infty$. This yields the proposed asymptotic formula, with some elementary
computations for getting the factor $\ac(\lambda)$. 
\end{proof}

\section{Dirichlet, Riquier and Fatou type convergence}\label{sec:DirRiq}

In the classical case of harmonic functions, that is, when $\lambda =1$,
the Dirichlet problem  
asks whether for any real or complex valued function $\gh \in \mathcal{C}(\partial T)$,
there is a continuous extension to $\wh T$ which is harmonic in $T$.
That is, we look for a function $h = h_{\gh}$ on $T$ such that 
$$
(I-P)h = 0 \AND \lim_{x \to \xi} h(x) = \gh(\xi) \quad \text{for every }\; \xi \in \partial T.
$$
If a solution exists then it is necessarily unique by the minimum (maximum) principle.
For our simple random walk on $T$, it is folklore that the Dirichlet problem is solvable,
and that the solution is given as the \emph{Poisson integral} of $\gh\,$:
$$
h(x) = \int_{\partial T} K(x,\xi|1)\, \gh(\xi)\, d\xi\,.
$$
We are now interested in the general case when $\la \in \C \setminus [-\rho\,,\,\rho]$, which will
remain fixed throughout this section.
First of all, the above question is not well-posed. Indeed, if for example $\lambda > 1$
is real, then  the ``Poisson integral'' of the constant function
$\uno$ on $\partial T$ is $\Phi(x|\lambda)$. By Proposition \ref{pro:asy}, it tends to $\infty$
as $|x| \to \infty$, since $\Fc(\lambda) > 1$. 
Thus, we need to normalise, compare with \cite{KoPi}. 
The same is necessary for the polyharmonic versions of higher order.

\begin{theorem}\label{thm:laDir} 
Let $\lambda \in \C$ with $|\lambda| > \rho$. For $\gh \in \mathcal{C}(\partial T)$ and $n \ge 0$, set 
$$
f(x) = \int_{\partial T}  K(x,\xi|\lambda)\,\hor_n(x,\xi|\lambda) 
\, \gh(\xi)\, d\xi
$$
Then $f$ is $\lambda$-polyharmonic of order $n+1$ and 
\begin{equation}\label{eq:asymp}
\lim_{x \to \xi} \frac{f(x)}{\Phi_n(x|\la)} = \gh(\xi)  \quad \text{for every }\; 
\xi \in \partial T.
\end{equation}
\end{theorem}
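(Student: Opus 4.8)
The plan splits into two independent parts. The polyharmonicity of $f$ is soft: for fixed $x$ the integrand $\xi\mapsto K(x,\xi|\lambda)\,\hor_n(x,\xi|\lambda)$ takes only the finitely many values $F(\lambda)^{j}j^{n}/(n!\,s(\lambda)^{n})$ with $j\in\{|x|,|x|-2,\dots,-|x|\}$ and is locally constant, so it is a bounded Borel function and, as $\gh\in\mathcal C(\partial T)$, $f$ is well defined. The operator $(\lambda\cdot I-P)^{n+1}$ evaluated at $x$ is a fixed finite linear combination of values at vertices within distance $n+1$ of $x$, hence it commutes with $\int_{\partial T}(\,\cdot\,)\,\gh\,d\xi$; by Lemma \ref{lem:new} we have $(\lambda\cdot I-P)^{n}\bigl[K(\cdot,\xi|\lambda)\,\hor_{n}(\cdot,\xi|\lambda)\bigr]=K(\cdot,\xi|\lambda)$, and $K(\cdot,\xi|\lambda)$ is $\lambda$-harmonic, so one more application of $(\lambda\cdot I-P)$ annihilates it. Thus $(\lambda\cdot I-P)^{n+1}f\equiv 0$.

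The real content is \eqref{eq:asymp}; recall that $x\to\xi$ in $\wh T$ means $|x\wedge\xi|\to\infty$, which in particular forces $|x|\to\infty$. The single quantitative ingredient I would establish first is the \emph{total-variation comparison}: there are constants $C,R_{0}>0$ (depending on $n,\lambda$) with
\[
\int_{\partial T}\bigl|K(x,\xi|\lambda)\,\hor_{n}(x,\xi|\lambda)\bigr|\,d\xi\ \le\ C\,\bigl|\Phi_{n}(x|\lambda)\bigr|\qquad\text{whenever }|x|\ge R_{0}.
\]
This is obtained by repeating the computation in the proof of Proposition \ref{pro:asy} with absolute values throughout: splitting $\partial T$ into the sets $A_{\ell}=\{\xi:|x\wedge\xi|=\ell\}$ of mass $\mm_{\ell}$, the left side equals $\tfrac{1}{n!\,|s(\lambda)|^{n}}\sum_{\ell=0}^{|x|}|F(\lambda)|^{|x|-2\ell}\,\bigl||x|-2\ell\bigr|^{n}\,\mm_{\ell}$; its term $\ell=|x|$ equals $\tfrac{q}{q+1}\,\tfrac{|x|^{n}}{n!\,|s(\lambda)|^{n}}\,|\Fc(\lambda)|^{|x|}$ (using $|F(\lambda)|=\bigl(q\,|\Fc(\lambda)|\bigr)^{-1}$), while the remaining terms form a convergent geometric-type series dominated by a constant multiple of that term, because $q\,|\Fc(\lambda)|^{2}>1$ by Lemma \ref{lem:properties}. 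Hence the integral is $\asymp|x|^{n}|\Fc(\lambda)|^{|x|}$, and the asymptotics of Proposition \ref{pro:asy} give both this upper bound and a matching lower bound $|\Phi_{n}(x|\lambda)|\ge c\,|x|^{n}|\Fc(\lambda)|^{|x|}$ for $|x|$ large; combining yields the displayed inequality, which then holds a fortiori with $\partial T$ replaced by any sub-arc.

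Now fix $\xi_{0}\in\partial T$ and $\ep>0$. By continuity of $\gh$ at $\xi_{0}$ I choose a vertex $u$ on the ray $\pi(o,\xi_{0})$ with $|\gh(\xi)-\gh(\xi_{0})|<\ep$ for all $\xi\in\partial T_{u}$. For $x$ close enough to $\xi_{0}$ one has $u\in\pi(o,x)$, $|x|\ge R_{0}$, $|x|>2|u|$ and $\Phi_{n}(x|\lambda)\ne0$ (Proposition \ref{pro:asy}). I write
\[
f(x)-\gh(\xi_{0})\,\Phi_{n}(x|\lambda)=\int_{\partial T}K(x,\xi|\lambda)\,\hor_{n}(x,\xi|\lambda)\,\bigl(\gh(\xi)-\gh(\xi_{0})\bigr)\,d\xi
\]
and split the domain into $\partial T_{u}$ and $\partial T\setminus\partial T_{u}$. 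Over $\partial T_{u}$ the modulus of the integrand is at most $\ep\,|K\hor_{n}|$, so that contribution is $\le\ep\,C\,|\Phi_{n}(x|\lambda)|$ by the comparison estimate. Over $\partial T\setminus\partial T_{u}$ every $\xi$ has $|x\wedge\xi|<|u|$ (since $u\in\pi(o,x)$), hence $\hor(x,\xi)=|x|-2|x\wedge\xi|>|x|-2|u|>0$ and $|K(x,\xi|\lambda)|=|F(\lambda)|^{\hor(x,\xi)}\le|F(\lambda)|^{|x|-2|u|}$, so that contribution is $\le 2\|\gh\|_{\infty}\,\tfrac{|x|^{n}}{n!\,|s(\lambda)|^{n}}\,|F(\lambda)|^{|x|-2|u|}$. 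Dividing by $|\Phi_{n}(x|\lambda)|\asymp|x|^{n}|\Fc(\lambda)|^{|x|}$, the first piece is $\le\ep\,C$ while the second is $O\bigl(|F(\lambda)|^{-2|u|}(|F(\lambda)|/|\Fc(\lambda)|)^{|x|}\bigr)\to0$ as $|x|\to\infty$, since $|F(\lambda)|<1/\sqrt q<|\Fc(\lambda)|$ by Lemma \ref{lem:properties}. Hence $\limsup_{x\to\xi_{0}}\bigl|f(x)/\Phi_{n}(x|\lambda)-\gh(\xi_{0})\bigr|\le C\,\ep$, and letting $\ep\downarrow0$ gives \eqref{eq:asymp}.

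The only genuine work is the comparison estimate, and within it the point that $C$ does not depend on the arc $\partial T_{u}$ on which $\gh$ is almost constant — which here is automatic, since the estimate is proved globally on $\partial T$ and passing to a sub-arc only shrinks the nonnegative integrand. Everything else is the familiar Poisson-kernel splitting together with the asymptotics of $\Phi_{n}$ already recorded in Proposition \ref{pro:asy}.
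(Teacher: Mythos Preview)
Your proof is correct and follows essentially the same route as the paper: the paper packages your ``complement tends to zero'' estimate as a separate lemma (Lemma~\ref{lem:unif}) in terms of the normalized kernel $\KK_n(x,\xi|\lambda)=K(x,\xi|\lambda)\,\hor_n(x,\xi|\lambda)/\Phi_n(x|\lambda)$, and your total-variation comparison is precisely the computation the paper records as $|\Phi|_n(x|\lambda)\sim C(\lambda)\,\tfrac{|x|^n}{n!\,|s(\lambda)|^n}\,|\Fc(\lambda)|^{|x|}$. The subsequent Poisson-type splitting into $\partial T_u$ and its complement is identical in both arguments.
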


Before the proof of this result, we introduce the normalized kernel
\begin{equation}\label{eq:kernel}
\KK_n(x,\xi|\la) = \frac{K(x,\xi|\lambda)\,\hor_n(x,\xi|\lambda)}{\Phi_n(x|\lambda)}\,,
\quad n \ge 0.
\end{equation}
We only need it for large $|x|$, and then $\Phi_n(x|\la) \ne 0$ 
by Proposition \ref{pro:asy}, so that the division in \eqref{eq:asymp} and 
the definition of $\KK_n$ are legitimate. If we fix such an $x \in T$ with $|x| \ge R$, 
the function 
$\xi \mapsto \KK_n(x,\xi|\lambda)$ is locally constant, since it depends only on $x \wedge \xi$ 
which ranges within the finite geodesic $\pi(o,x)$. Therefore it is continuous.
\begin{lem}\label{lem:unif} Let $y \in T$. Then
$$
\lim_{|x| \to \infty\,,\, x \in \partial T_y} \KK_n(x,\xi|\lambda) = 0
$$
uniformly for $\xi \in \partial T \setminus \partial T_y\,$.
\end{lem}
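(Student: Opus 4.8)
The plan is to write out $\KK_n(x,\xi|\la)$ in closed form when $x$ runs to infinity inside the branch determined by $y$ (i.e. $y\in\pi(o,x)$, which is how ``$x\in\partial T_y$'' is to be read here) and $\xi\in\partial T\setminus\partial T_y$, and then to dominate it uniformly in $\xi$ by an exponentially small quantity, using the asymptotics of $\Phi_n$ from Proposition \ref{pro:asy}.

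The one geometric input is the following: if $y\in\pi(o,x)$ and $\xi\notin\partial T_y$, then $y\notin\pi(o,\xi)$, so the geodesics $\pi(o,x)$ and $\pi(o,\xi)$ already branch apart at the vertex $v:=y\wedge\xi$, a proper ancestor of $y$; hence $x\wedge\xi=y\wedge\xi=v$ and $|v|\le|y|-1$, regardless of how far out $x$ lies. Since $v\in\pi(o,x)$ we have $d(x,v)=|x|-|v|$, so $\hor(x,\xi)=d(x,v)-|v|=|x|-2|v|$, and the definitions of $K$, $\hor_n$ and $\KK_n$ give
\begin{equation*}
\KK_n(x,\xi|\la)=\frac{F(\la)^{|x|-2|v|}\,(|x|-2|v|)^n}{n!\,s(\la)^n\,\Phi_n(x|\la)}\,,
\end{equation*}
where, crucially, the denominator does not depend on $\xi$.

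Now I invoke Proposition \ref{pro:asy}. Since $\ac(\la)\ne0$ and $\Phi_n(x|\la)\sim\ac(\la)\dfrac{(-1)^n|x|^n}{n!\,s(\la)^n}\Fc(\la)^{|x|}$, there are $R_1\ge\max(R,2|y|)$ and $c_1>0$, depending only on $n$ and $\la$, with $|\Phi_n(x|\la)|\ge c_1|x|^n|\Fc(\la)|^{|x|}$ for all $|x|\ge R_1$. For such $x$ and any $\xi\notin\partial T_y$ one has $0\le|x|-2|v|\le|x|$, hence $(|x|-2|v|)^n\le|x|^n$; moreover $|F(\la)|<1$ by Lemma \ref{lem:properties}, so $|F(\la)|^{-2|v|}\le|F(\la)|^{-2|y|}$. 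Therefore
\begin{equation*}
|\KK_n(x,\xi|\la)|\le\frac{|F(\la)|^{-2|y|}}{n!\,|s(\la)|^n\,c_1}\,\Bigl(\frac{|F(\la)|}{|\Fc(\la)|}\Bigr)^{\!|x|}.
\end{equation*}
By Lemma \ref{lem:properties}, $F(\la)\Fc(\la)=1/q$, so $|F(\la)|/|\Fc(\la)|=q|F(\la)|^2<1$ because $|F(\la)|<1/\sqrt q$ for $|\la|>\rho$. Hence the right-hand side is a constant depending only on $y,n,\la$ times a fixed ratio in $(0,1)$ raised to the power $|x|$; it tends to $0$ as $|x|\to\infty$ and does not depend on $\xi\in\partial T\setminus\partial T_y$, which is exactly the asserted uniform convergence.

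The substance of the argument is the geometric first step — that for $\xi\notin\partial T_y$ the confluent $x\wedge\xi$ stays bounded (in fact frozen at $y\wedge\xi$) as $x$ escapes to infinity inside the branch of $y$; once that is in place, everything else is routine bookkeeping with the explicit formula and Proposition \ref{pro:asy}, the only point to watch being that every constant introduced depends on $y$, $n$ and $\la$ only, not on $\xi$, which yields the uniformity.
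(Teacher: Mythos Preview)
Your proof is correct and follows essentially the same route as the paper's: identify $x\wedge\xi=y\wedge\xi$ from the geometry, compute $\hor(x,\xi)=|x|-2|y\wedge\xi|$, and then combine the explicit numerator with the asymptotics of $\Phi_n$ from Proposition~\ref{pro:asy} to obtain a bound of the form $C(y,n,\lambda)\,(|F(\lambda)|/|\Fc(\lambda)|)^{|x|}\to0$. Your write-up is in fact more careful about uniformity in $\xi$ than the paper's, which simply records the asymptotic and says it tends to~$0$; your explicit inequalities $(|x|-2|v|)^n\le|x|^n$ and $|F(\lambda)|^{-2|v|}\le|F(\lambda)|^{-2|y|}$ make the independence from $\xi$ transparent.
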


\begin{proof}
If $x \in T_y$  and $\xi \in \partial T \setminus \partial T_y$ then 
$x \wedge \xi = y \wedge \xi \in \pi(o,y)$.
We have 
$$
\hor(x,\xi) = |x| - 2|y \wedge \xi| \ge |x| - 2|y|\,.
$$
Therefore, using Lemma \ref{lem:properties} and Proposition \ref{pro:asy},
$$
\KK_n(x,\xi|\lambda)
\sim 
\frac{|F(\lambda)|^{-2|y\wedge \xi|}}{|\ac(\lambda)|} \,
\left|\frac{F(\lambda)}{\Fc(\lambda)}\right|^{|x|}\, \left(\frac{|x|-2|y\wedge \xi|}{|x|}\right)^{n}\,,
$$
which tends to $0$ as proposed.
\end{proof}

\begin{proof}[Proof of Theorem \ref{thm:laDir}.]
For $x \in T$ with $|x| \ge R$, 
$$
d\mu_x(\xi) = \KK_n(x,\xi|\lambda)\,d\xi
$$
defines a complex Borel measure on $\partial T$. (It also depends on $\lambda$ and $n$, which we
omit in the present notation.) We have
$
\mu_x(\partial T) = 1\,.
$ 
We write $|\mu|_x$ for its total variation measure. Its density with
respect to $d\xi$ is
$\bigl|K(x,\xi|\lambda)\,\hor_n(x,\xi|\lambda)\bigr|\big/\bigl|\Phi_n(x|\lambda)\bigr|$.
Let us write 
$$
|\Phi|_n(x|\lambda) = \int_{\partial T} \bigl|K(x,\xi|\lambda)\,\hor_n(x,\xi|\lambda)\bigr|\,d\xi\,. 
$$
A computation completely analogous to the one in the proof of Proposition \ref{pro:asy} shows
that 
$$
|\Phi|_n(x|\lambda) \sim C(\lambda)\, \frac{|x|^n}{n!\,|s(\lambda)|^n}\,|\Fc(\lambda)|^{|x|}
\,,\quad \text{where} \quad C(\lambda) 
= \frac{1}{q+1}\, \frac{q^2 |\Fc(\lambda)|^2 - 1}{q |\Fc(\lambda)|^2 - 1}.
$$
Therefore
\begin{equation}\label{eq:as}
|\mm|_x(\partial T) = \frac{|\Phi|_n(x|\lambda)}{|\Phi_n(x|\la)|} \to 
\frac{C(\lambda)}{|\ac(\lambda)|}\,, \quad \text{as }\; |x| \to \infty\,.
\end{equation}
We can now prove \eqref{eq:asymp} along classical lines. Let $\xi_0 \in \partial T$ and $\ep > 0$. 
Then, given $\gh \in C(\partial T)$, there is a neighbourhood of $\xi_0$ on which 
$|\gh(\xi)-\gh(x_0)|<\ep$.
We may assume that this neighbourhood is of the form $\partial T_y\,$, where $y \in \pi(o,\xi_0)$.
If $x \to \xi_0$ then $x \in T_y$ when $|x|$ is sufficiently large.  Then
$$
\left|\frac{f(x)}{\Phi_n(x|\la)} - \gh(\xi_0)\right| =
\left|\int_{\partial T} \Bigl(\gh(\xi)-\gh(\xi_0)\Bigr)\,
 \, d\mu_x(\xi)\right| \le  2\|\gh\|_{\infty}\, |\mu|_x(\partial T\setminus \partial T_y)
+ \ep \, |\mm|_x(\partial T_y)\,.
$$ 
Now Lemma \eqref{lem:unif} implies that for $x \to \xi$ we have 
$
|\mu|_x(\partial T\setminus \partial T_y) \to 0\,,
$
while $|\mu|_x(\partial T_y)$ remains bounded by Lemma \eqref{eq:as}.
\end{proof}

Next, we consider a Fatou-type theorem for polyharmonic functions. That is, in the integral
of Theorem \ref{thm:laDir}
we replace $\gh(\xi)\, d\xi$ by a complex Borel measure $\nu$ on $\partial T$. We need to 
consider a restricted type of convergence to the boundary. 

\begin{dfn}\label{def:cone} 
Let $\xi \in \partial T$ and $a \ge 0$. The \emph{cone} at $\xi$ of width $a$ is
$$
\Gamma_a(\xi) = \bigl\{ x \in T : d\bigl(x,\pi(o,\xi)\bigr) \le a \bigr\}. 
$$
\end{dfn}
The motivation for this definition is well-known: in the open unit disk, consider a 
cone $C_{\alpha}(z)$  whose vertex is a point $z$ on the unit circle, whose axes
connects the origin with $z$, and whose opening angle is $\alpha < \pi$.
Then, passing to the hyperbolic metric on the disk, all elements of the cone 
are at bounded distance (depending on $\alpha$) from the axes.  The standard graph metric of
$T$ should be seen as an analogue of the hyperbolic metric on the disk, while a tree-analogue
of the Euclidean metric is the ultrametric $\theta$ of \eqref{eq:theta}. Compare with 
{\sc Boiko and Woess}~\cite{BW} for a ``dictionary'' concerning the many of the other analogies 
between the potential theory on the unit disk and $T$. Thus, $a$ is a substitute for the 
angle $\alpha$, and of course, if $|x| \to \infty$ within $\Gamma_a(\xi)$ then
$x \to \xi$ in the topology of $\wh T$. We shall use the following tools. 

\begin{lem}\cite{KoPi}\label{lem:max}
For $\gh \in L^1(\partial T,\mm)$, let 
$$
\mathcal{M}\gh(\xi) = \sup_{x \in \pi(o,\xi)} 
\frac{1}{\mm(\partial T_x)} \int_{\partial T_x} |\gh| \, d\mm
$$
be the associated \emph{Hardy-Littlewood maximal function} on $\partial T$. Then the 
operator $\gh \mapsto \mathcal{M}\gh$ is weak type (1,1), that is, there is $C >0$
such that for every $t > 0$, 
$$
\mm\bigl[\, |\mathcal{M}\gh| \ge t\bigr] \le C\, \|\gh\|_1\big/t \quad \text{for all }\; 
\gh \in L^1(\partial T,\mm). 
$$
\end{lem}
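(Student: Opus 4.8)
The plan is to recognise $\mathcal{M}$ as a \emph{martingale maximal operator}: the arcs $\partial T_x$ over which the supremum in $\mathcal{M}\gh$ is taken run over a single chain $\{\partial T_x : x\in\pi(o,\xi)\}$, totally ordered by inclusion, so no Vitali covering is needed and the elementary ``rising sun'' disjointification suffices, giving the sharp constant $C=1$. Fix $t>0$ and $\gh\in L^1(\partial T,\mm)$; it is enough to prove $\mm\bigl[\mathcal{M}\gh>t\bigr]\le\|\gh\|_1/t$, since then $\{\mathcal{M}\gh\ge t\}=\bigcap_{t'<t}\{\mathcal{M}\gh>t'\}$ and letting $t'\uparrow t$ yields the same bound for $\{\mathcal{M}\gh\ge t\}$. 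For each $\xi$ with $\mathcal{M}\gh(\xi)>t$ there is, by definition of $\mathcal{M}$, a vertex $x\in\pi(o,\xi)$ with $\frac{1}{\mm(\partial T_x)}\int_{\partial T_x}|\gh|\,d\mm>t$; call such a $\partial T_x$ a \emph{good} arc. The good arcs then cover $E:=\{\mathcal{M}\gh>t\}$.

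The key structural input is that any two boundary arcs $\partial T_x$, $\partial T_{x'}$ are either nested or disjoint: they are nested according to whether $x\in\pi(o,x')$ or $x'\in\pi(o,x)$, and disjoint otherwise (this is immediate from the ultrametric $\theta$ of \eqref{eq:theta}, or directly from the definition $\partial T_x=\{\xi:x\in\pi(o,\xi)\}$). Since the set of vertices on $\pi(o,x)$ is finite, among the good arcs containing a given good arc $\partial T_x$ there is a largest one, which is therefore a \emph{maximal} good arc; hence every good arc lies in a maximal good arc, and two distinct maximal good arcs are disjoint. Each boundary arc has positive $\mm$-measure, so the maximal good arcs form an at most countable disjoint family $\partial T_{x_1},\partial T_{x_2},\dots$ . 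Consequently $E\subseteq\bigcup_j\partial T_{x_j}$ and
\[
\mm(E)\le\sum_j\mm(\partial T_{x_j})<\frac1t\sum_j\int_{\partial T_{x_j}}|\gh|\,d\mm
=\frac1t\int_{\bigcup_j\partial T_{x_j}}|\gh|\,d\mm\le\frac{\|\gh\|_1}{t},
\]
using disjointness in the middle equality.

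There is no genuine obstacle here; the only points needing (routine) care are the existence of maximal good arcs — handled by the finiteness of $\pi(o,x)$ — and the countability of that family, which comes from disjointness in a probability space. Equivalently, one may note that $\xi\mapsto\frac{1}{\mm(\partial T_{x_k})}\int_{\partial T_{x_k}}|\gh|\,d\mm$, with $x_k$ the vertex of $\pi(o,\xi)$ at distance $k$ from $o$, is the non-negative martingale $\Ex[\,|\gh|\mid\mathcal F_k]$ for the filtration $\mathcal F_k=\sigma\{\partial T_x:|x|=k\}$, and $\mathcal{M}\gh=\sup_k\Ex[\,|\gh|\mid\mathcal F_k]$, so the statement is precisely Doob's weak-type $(1,1)$ maximal inequality.
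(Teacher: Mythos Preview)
Your proof is correct. The paper does not give its own proof of this lemma; it is simply quoted from \cite{KoPi} and used as a black box. Your argument is the standard one in an ultrametric setting: since any two arcs $\partial T_x$, $\partial T_{x'}$ are either nested or disjoint, maximal ``good'' arcs are automatically pairwise disjoint and no Vitali selection is needed, which indeed yields the sharp constant $C=1$. The martingale reformulation you mention (identifying $\mathcal{M}\gh$ with $\sup_k \Ex[\,|\gh|\mid\mathcal{F}_k]$ and invoking Doob's weak-type inequality) is equally valid and is essentially the same argument in different clothing.
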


With $R$ as in Proposition \ref{pro:asy}, we now define for $a \ge 0$ and 
$\gh \in L^1(\partial T,\mm)$,
\begin{equation}\label{eq:maxa}
\mathfrak{M}_a \gh (\xi) = \sup 
\left\{ \left|\int_{\partial T} \mathcal{K}_n(x,\cdot|\lambda)\, \gh \, d\mm\right| : 
x \in \Gamma_a(\xi)\,,\; |x| \ge R \right\}.
\end{equation}

\begin{pro}\label{pro:max} For every $a \ge 0$ there is a constant $C_a$ such that
$$
\mathfrak{M}_a \gh \le C_a \,\mathcal{M}\gh
\quad \text{for every }\; 
\gh \in L^1(\partial T, \mm).
$$ 
\end{pro}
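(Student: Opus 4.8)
The plan is to bound the oscillating kernel $\mathcal{K}_n(x,\xi|\lambda)$ by a sum of (normalised) indicators of boundary arcs $\partial T_z$ with $z \in \pi(o,x)$, and then estimate the resulting integrals by the Hardy–Littlewood maximal function. Fix $\xi \in \partial T$ and $x \in \Gamma_a(\xi)$ with $|x| \ge R$. Let $v = v(x)$ be the point on $\pi(o,\xi)$ closest to $x$, so $d(x,v) \le a$ and $v \in \pi(o,x)$. For $\eta \in \partial T$, the value $\mathcal{K}_n(x,\eta|\lambda)$ depends only on $w := x \wedge \eta$, which ranges over $\pi(o,x)$; write $\ell = |w|$ and $\hor(x,\eta) = |x| - 2\ell$. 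Using the explicit forms $K(x,\eta|\lambda) = F(\lambda)^{|x|-2\ell}$, $F(\lambda) = (q\Fc(\lambda))^{-1}$, the definition of $\hor_n$, and the asymptotics $|\Phi_n(x|\lambda)| \asymp |\ac(\lambda)|\,\frac{|x|^n}{n!\,|s(\lambda)|^n}\,|\Fc(\lambda)|^{|x|}$ from Proposition \ref{pro:asy}, one gets for $|x| \ge R$ an estimate of the shape
$$
\bigl|\mathcal{K}_n(x,\eta|\lambda)\bigr| \le C'\,\bigl(q\,|\Fc(\lambda)|^2\bigr)^{-(|x|-\ell)}\,\Bigl(\tfrac{|x|-2\ell}{|x|}\Bigr)^{n}
$$
when $x\wedge\eta$ has length $\ell$; here $C'$ depends only on $\lambda, n$ (after absorbing the $2$-factor from Proposition \ref{pro:asy}). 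The key gain is the geometric decay factor $(q|\Fc(\lambda)|^2)^{-(|x|-\ell)}$, with $q|\Fc(\lambda)|^2 > 1$ by Lemma \ref{lem:properties}, since the number of $\eta$ with $x\wedge\eta$ of a given length $\ell$ carries mass $\mm(\{x\wedge\eta \text{ has length }\ell\})$ comparable to $q^{-\ell}$.

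Next I would split the integral $\int_{\partial T}\mathcal{K}_n(x,\cdot|\lambda)\,\gh\,d\mm$ according to the length $\ell$ of $x\wedge\eta$. For the dominant piece $\ell = |x|$, i.e. $\eta \in \partial T_x$, the arc $\partial T_x$ has $\mm(\partial T_x)\asymp q^{-|x|}$, and $\mathcal{K}_n$ is bounded there (by $C'$), so this contribution is $\le C'\,\mm(\partial T_x)\,\frac{1}{\mm(\partial T_x)}\int_{\partial T_x}|\gh|\,d\mm \le C'\,\mathcal{M}\gh(\xi)$ once I check $x \in \pi(o,\xi')$ for some representative — more carefully, since $d(x,v)\le a$, the arc $\partial T_x$ is contained in $\partial T_{v}$ and $\mm(\partial T_v)/\mm(\partial T_x) \le q^a$, so $\frac{1}{\mm(\partial T_x)}\int_{\partial T_x}|\gh| \le q^a\cdot\frac{1}{\mm(\partial T_v)}\int_{\partial T_v}|\gh| \le q^a\,\mathcal M\gh(\xi)$ because $v \in \pi(o,\xi)$. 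For $\ell < |x|$, write $w$ for the vertex of length $\ell$ on $\pi(o,x)$ and note $\{\eta : x\wedge\eta = w\} \subset \partial T_w \setminus \partial T_{w'}$ where $w'$ is the successor of $w$ on $\pi(o,x)$; hence $\int_{\{x\wedge\eta=w\}}|\gh|\,d\mm \le \int_{\partial T_w}|\gh|\,d\mm \le \mm(\partial T_w)\,\mathcal M\gh'$ where $\mathcal M\gh'$ is the maximal average over $\partial T_w$ — and $w$ lies on $\pi(o,\xi)$ if $\ell \le |v|$, and is within distance $a$ of $\pi(o,\xi)$ otherwise; in either case the ratio argument above gives $\frac{1}{\mm(\partial T_w)}\int_{\partial T_w}|\gh|\,d\mm \le q^a\,\mathcal M\gh(\xi)$. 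Combining, the $\ell$-piece is bounded by
$$
C'\,\bigl(q|\Fc(\lambda)|^2\bigr)^{-(|x|-\ell)}\,\Bigl(\tfrac{|x|-2\ell}{|x|}\Bigr)^n\,\mm(\partial T_w)\,\cdot\,\frac{1}{\mm(\partial T_w)}\int_{\partial T_w}|\gh|\,d\mm
\le C''\,\bigl(q|\Fc(\lambda)|^2\bigr)^{-(|x|-\ell)}\,q^a\,\mathcal M\gh(\xi),
$$
using $\mm(\partial T_w)\asymp q^{-\ell}$ and $q|\Fc(\lambda)|^2 > 1$ after noting the decay factor absorbs the discrepancy between $q^{-\ell}$ and the true density.

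Finally I would sum over $\ell = 0,1,\dots,|x|$. The factor $(q|\Fc(\lambda)|^2)^{-(|x|-\ell)}$ summed over $\ell$ is a geometric series with ratio $(q|\Fc(\lambda)|^2)^{-1} < 1$, hence bounded by a constant $1/(1-(q|\Fc(\lambda)|^2)^{-1})$ independent of $x$; the polynomial factor $((|x|-2\ell)/|x|)^n$ is bounded by $1$. Therefore $\bigl|\int_{\partial T}\mathcal{K}_n(x,\cdot|\lambda)\,\gh\,d\mm\bigr| \le C_a\,\mathcal M\gh(\xi)$ with $C_a$ depending only on $a, q, \lambda, n$, and taking the supremum over $x \in \Gamma_a(\xi)$ with $|x| \ge R$ yields $\mathfrak M_a\gh(\xi) \le C_a\,\mathcal M\gh(\xi)$. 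The main obstacle is bookkeeping the geometry: making precise how the vertices $w \in \pi(o,x)$ relate to $\pi(o,\xi)$ when $x$ is only at distance $\le a$ from $\pi(o,\xi)$ (the deepest $a$ vertices of $\pi(o,x)$ may leave $\pi(o,\xi)$), and verifying that throughout, the relevant arc averages of $|\gh|$ are controlled by $\mathcal M\gh(\xi)$ up to the uniform factor $q^a$. The rest is the elementary geometric-series estimate made possible by $q|\Fc(\lambda)|^2 > 1$, which is exactly Lemma \ref{lem:properties}.
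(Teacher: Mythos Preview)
Your approach is essentially the paper's: decompose the integral by the level $\ell = |x \wedge \eta|$, use Proposition~\ref{pro:asy} to bound the normalised kernel, control each level by the Hardy--Littlewood maximal function, and sum the geometric series with ratio $(q|\Fc(\lambda)|^2)^{-1}=|F(\lambda)/\Fc(\lambda)|<1$.

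There is, however, a computational slip in your pointwise kernel bound. From $|F(\lambda)| = (q|\Fc(\lambda)|)^{-1}$ one gets
\[
\frac{|F(\lambda)|^{|x|-2\ell}}{|\Fc(\lambda)|^{|x|}}
= q^{-(|x|-2\ell)}\,|\Fc(\lambda)|^{-2(|x|-\ell)}
= q^{\ell}\,(q|\Fc(\lambda)|^2)^{-(|x|-\ell)}\,,
\]
so the correct bound carries an extra factor $q^{\ell}$ that your stated estimate omits; as written, your inequality $|\mathcal{K}_n(x,\eta|\lambda)| \le C'(q|\Fc(\lambda)|^2)^{-(|x|-\ell)}(\cdots)^n$ is false for large $\ell$. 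This $q^{\ell}$ is precisely what cancels against $\mm(\partial T_w)\asymp q^{-\ell}$ when you integrate, yielding the correct per-level contribution $C''(q|\Fc(\lambda)|^2)^{-(|x|-\ell)}\mathcal M\gh(\xi)$. Once this factor is restored, the rest of your argument goes through.

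One structural difference from the paper: it first treats $a=0$ (so every vertex $w$ on $\pi(o,x)$ already lies on $\pi(o,\xi)$ and feeds directly into $\mathcal M\gh(\xi)$) and then reduces general $a$ to this case by comparing $\mathcal K_n(y,\cdot)$ to $\mathcal K_n(x,\cdot)$ for the point $x\in\pi(o,\xi)$ with $|x|=|y|$, using $|\hor(y,\eta)-\hor(x,\eta)|\le d(x,y)\le 2a$. You instead handle general $a$ in one pass by enlarging $\partial T_w$ to $\partial T_v$ at cost $q^a$; this is equally valid and yields a comparable constant.
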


\begin{proof} Let $\pi(o,\xi)= [o=x_0\,,x_1\,,x_2\,,\dots].$
First, let $a = 0$. Fix $x = x_r$ with $r \ge R$.
Then,  with $A_{\ell} = \{ \eta \in \partial T : |x \wedge \eta| = \ell \}$
as above, we use the properties listed in Lemma \ref{lem:properties} and
compute
$$
\begin{aligned}
\left|\int_{\partial T} \mathcal{K}_n(x,\cdot|\lambda)\, \gh\, d\mm\right| 
&\le 2 \, \int\limits_{\partial T} 
\frac{\bigl|K(x,\cdot|\la)\, \hor(x,\cdot)^n\bigr|}{|\ac(\lambda)| \,\,|x|^n\,|\Fc(\lambda)|^{|x|}} 
\,|\gh| \,  d\mm\\
&= \frac{2}{|\ac(\lambda)|} \,\left|\frac{F(\lambda)}{\Fc(\lambda)}\right|^{|x|}
\sum_{\ell=0}^{|x|} \int\limits_{A_{\ell}} |F(\lambda)|^{-2\ell}\,
\left(\frac{|x|-2\ell}{|x|}\right)^n |\gh| \, d\mm\\
&\le \frac{2}{|\ac(\lambda)|}\,\left|\frac{F(\lambda)}{\Fc(\lambda)}\right|^{|x|}
\sum_{\ell=0}^{|x|} \,\int\limits_{\partial T_{x_{\ell}}}|F(\lambda)|^{-2\ell}\,|\gh| \, d\mm\\
&\le \frac{2(q+1)}{q\,|\ac(\lambda)|}\,
\sum_{\ell=0}^{|x|} q^{-\ell}|F(\lambda)|^{-2\ell}\,\mathcal{M}g(\xi)\\
&= \frac{2(q+1)}{q\,|\ac(\lambda)|}\,\sum_{\ell=0}^{|x|} 
\left|\frac{F(\lambda)}{\Fc(\lambda)}\right|^{|x|-\ell}\,\mathcal{M}g(\xi)\\ 
&\le C_0\,\mathcal{M}g(\xi)\,,\qquad \text{where} \quad 
C_0 = \frac{2(q+1)}{q\,|\ac(\lambda)|}\,\frac{1}{1-|F(\lambda)/\Fc(\lambda)|}\,.
\end{aligned}
$$
For general $a \in \N$, let  $y \in T$ with $|y| \ge R$ and $d\bigl(y, \pi(o,\xi)\bigr) \le a$.
Then $d(x,y) \le 2a$, where $x$ is the element on $\pi(o,\xi)$ with $|x|=|y|$.
Recall that $|F(\lambda)| <1$. 
Since $|\hor(x,\eta) - \hor(y,\eta)| \le d(x,y)$, we have 
$$
|K(y,\eta|\lambda)| = |F(\lambda)|^{\hor(y,\eta)} \le |F(\lambda)|^{-2a} K(x,\eta|\lambda)
\AND |\hor(y,\eta)|^n \le (1+2a)^n |\hor(x,\eta)|^n \,,
$$
for every $\eta \in \partial T$. Therefore 
$$
|\mathcal{K}_n(y,\cdot|\lambda)| \le (1+2a)^n\,|F(\lambda)|^{-2a}\,|\mathcal{K}_n(x,\cdot|\lambda)|\,.
$$
Setting $C_a = (1+2a)^n\,|F(\lambda)|^{-2a}\, C_0\,$, the proposition follows. 
\end{proof}

After Lemma \ref{lem:max} and Proposition \ref{pro:max}, also
the proof of the following theorem now follows the strategy of \cite{KoPi}. 
For the sake of providing a complete picture in the situation of trees,
we also include some of the ``standard'' details in its proof.

\begin{theorem}\label{thm:laFat} 
Let $\lambda \in \C$ with $|\lambda| > \rho$, and let  $\nu$ be a complex Borel measure on
$\partial T$. For $n \ge 0$, set 
$$
f(x) = \int_{\partial T} K(x,\xi|\lambda)\,\hor_n(x,\xi|\lambda) 
\, d\nu(\xi).
$$
Then $f$ is $\lambda$-polyharmonic of order $n+1$ and  
\begin{equation}\label{eq:Fat}
\lim_{x \to \xi\,,\, x \in \Gamma_a(\xi)} \frac{f(x)}{\Phi_n(x|\la)} = \gh(\xi)  \quad 
\text{for every $\;a \ge 0\;$ and $\mm$-almost every }\; 
\xi \in \partial T,
\end{equation}
where $\gh$ is the Radon-Nikodym derivative of the absolutely continuous part of $\nu$
with respect to the uniform distribution $\mm$ on $\partial T$.
\end{theorem}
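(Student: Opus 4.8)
The plan is to follow the classical Fatou strategy, now available thanks to the maximal inequality of Proposition \ref{pro:max}. First I would dispose of the polyharmonicity of $f$: since $(\lambda\cdot I - P)^n$ applied under the integral sign hits $K(\cdot,\xi|\lambda)\,\hor_n(\cdot,\xi|\lambda)$ and produces $K(\cdot,\xi|\lambda)$ by Lemma \ref{lem:new}, we get $(\lambda\cdot I - P)^n f(x) = \int_{\partial T} K(x,\xi|\lambda)\,d\nu(\xi)$, which is $\lambda$-harmonic by \eqref{eq:Cartier}; hence $(\lambda\cdot I - P)^{n+1} f = 0$. (Differentiation under the integral is legitimate because for fixed $x$ the integrand is a finite sum over the vertices of $\pi(o,x)$, the measure $\nu$ being finite.)

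For the boundary convergence, decompose $\nu = \gh\,d\mm + \nu_s$ into its absolutely continuous and singular parts (Lebesgue decomposition), so that $f = f_{ac} + f_s$ with the obvious notation. I would handle the two pieces separately. For $f_{ac}$, the key observation is that $f_{ac}(x)/\Phi_n(x|\lambda) = \int_{\partial T}\KK_n(x,\cdot|\lambda)\,\gh\,d\mm$, and I want to show this tends to $\gh(\xi)$ for $\mm$-a.e.\ $\xi$ as $x\to\xi$ in $\Gamma_a(\xi)$. The standard device: approximate $\gh$ in $L^1(\partial T,\mm)$ by a locally constant (hence continuous) function $\gh_\ep$ with $\|\gh-\gh_\ep\|_1 < \ep$; for the continuous part, Theorem \ref{thm:laDir} gives convergence everywhere (a fortiori non-tangentially, since $\Gamma_a(\xi)$-convergence implies $x\to\xi$). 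The error term $\int \KK_n(x,\cdot|\lambda)(\gh-\gh_\ep)\,d\mm$ is, for $x\in\Gamma_a(\xi)$ with $|x|\ge R$, bounded in absolute value by $\mathfrak{M}_a(\gh-\gh_\ep)(\xi) \le C_a\,\mathcal{M}(\gh-\gh_\ep)(\xi)$ by Proposition \ref{pro:max}. Then the weak-$(1,1)$ bound of Lemma \ref{lem:max}, together with the trivial bound $|\gh(\xi)-\gh_\ep(\xi)|$ being large only on a small set, lets me run the usual argument: for any $t>0$, the set where $\limsup_{x\to\xi,\,x\in\Gamma_a(\xi)}|f_{ac}(x)/\Phi_n(x|\lambda) - \gh(\xi)| > 2t$ is contained in $\{\mathcal{M}(\gh-\gh_\ep) > t/C_a\}\cup\{|\gh-\gh_\ep|>t\}$, which has $\mm$-measure $\le (CC_a/t + 1/t)\,\ep$. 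Letting $\ep\to 0$ and then $t\to 0$ along a countable sequence gives a.e.\ convergence.

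For the singular part $f_s$, I want to show $f_s(x)/\Phi_n(x|\lambda)\to 0$ for $\mm$-a.e.\ $\xi$. Replacing $\nu_s$ by its total variation $|\nu_s|$ (a finite positive singular measure) and noting $|f_s(x)/\Phi_n(x|\lambda)| \le \int\bigl|\KK_n(x,\cdot|\lambda)\bigr|\,d|\nu_s|$, I would run a maximal-function argument analogous to Proposition \ref{pro:max} but with $|\nu_s|$ in place of $\gh\,d\mm$: the same chain of estimates (passing from $A_\ell$ to $\partial T_{x_\ell}$, pulling out the geometric factor $|F(\lambda)/\Fc(\lambda)|^{|x|-\ell}$) shows that the non-tangential maximal function of $f_s/\Phi_n$ is dominated by a constant times $\sup_{x\in\pi(o,\xi)} |\nu_s|(\partial T_x)/\mm(\partial T_x)$, the maximal function of the singular measure. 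By the standard differentiation theory on the ultrametric space $(\partial T,\mm)$ (Vitali/dyadic covering), this maximal function is finite a.e., and in fact $|\nu_s|(\partial T_x)/\mm(\partial T_x)\to 0$ as $x\to\xi$ for $\mm$-a.e.\ $\xi$ since $\nu_s\perp\mm$. A truncation argument ($|\nu_s| = $ small-mass piece $+$ piece supported near the few bad ends) then forces $f_s(x)/\Phi_n(x|\lambda)\to 0$ a.e. Adding the two parts yields \eqref{eq:Fat}.

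The main obstacle I anticipate is the singular-part estimate: Proposition \ref{pro:max} is stated only for $\gh\in L^1(\partial T,\mm)$, so I must redo its proof for an arbitrary finite positive Borel measure, checking that every step — in particular the inclusion $A_\ell\subseteq\partial T_{x_\ell}$ and the passage to $\sup_x |\nu_s|(\partial T_x)/\mm(\partial T_x)$ — survives, and then invoke the appropriate a.e.-vanishing statement for the ratio $|\nu_s|(\partial T_x)/\mm(\partial T_x)$ along $x\to\xi$. Everything else is a routine transcription of the Euclidean/real-hyperbolic Fatou proof, as the authors themselves indicate by citing \cite{KoPi}.
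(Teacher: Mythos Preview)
Your overall strategy is correct and close to the paper's, but the treatments differ in one place worth noting. For the absolutely continuous part, both you and the paper run the standard maximal-function argument; the paper packages it via a summable sequence $(\gh_k)$ and Borel--Cantelli, while you use a single approximant and the usual $\limsup$/exceptional-set estimate---these are interchangeable.

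The real divergence is in the singular part. You propose to redo Proposition~\ref{pro:max} for an arbitrary finite measure, bound the non-tangential maximal function of $f_s/\Phi_n$ by $C_a\,\mathcal{M}|\nu_s|$, and then invoke differentiation of singular measures plus a truncation. This works, but it is more than is needed, and your truncation sketch is where the real labour lies (note that $\mathfrak{M}_a f_s \le C_a\,\mathcal{M}|\nu_s|$ alone only gives finiteness a.e., not convergence to~$0$; the decomposition you allude to is essential). The paper avoids all of this by using Lemma~\ref{lem:unif} directly: the singular measure is carried by a Borel set $E$ with $\mm(E)=0$; for each $\ep>0$ cover $E$ by finitely many boundary arcs $\partial T_{y_1},\dots,\partial T_{y_k}$ of total $\mm$-measure $<\ep$; for any $\xi_0$ outside their union one can choose $y\in\pi(o,\xi_0)$ with $\partial T_y$ disjoint from all of them, and then Lemma~\ref{lem:unif} gives $\KK_n(x,\cdot|\lambda)\to 0$ uniformly on $E_\ep$ as $x\to\xi_0$, so $|f_s(x)/\Phi_n(x|\lambda)|\le \sup_{E_\ep}|\KK_n(x,\cdot|\lambda)|\cdot|\nu_s|(\partial T)\to 0$. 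Letting $\ep\downarrow 0$ yields a.e.\ convergence. This sidesteps entirely the extension of the maximal inequality to measures that you flagged as the main obstacle.
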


\begin{proof}
We first give an outline of the standard fact that the limit in \eqref{eq:Fat} is $0$ when
$\nu$ is singular with respect to equidistribution. The latter means that there is a 
Borel set $E \subset \partial T$ with uniform measure $0$ such that $\partial T \setminus E$
is a $\nu$-null-set. For every $\ep > 0$ there are  disjoint boundary arcs 
$\partial T_{y_1}\,,\dots, \partial T_{y_k}$ depending on $\ep$, whose union $E_{\ep}$ 
contains $E$  and has uniform measure $< \ep$. Let $|\nu|$ be the total variation measure
of $\nu$.
If $x \to \xi_0 \in \partial T \setminus E_{\ep}\,$, then by Lemma \ref{lem:unif},
$$
\left|\frac{f(x)}{\Phi_n(x|\lambda)}\right| \le \int_{E_{\ep}}   
|\KK_n(x,\xi|\lambda)|
\,d|\nu|(\xi) \to 0\,.
$$
Since this holds for every $\ep > 0$, we get that $f(x)/\Phi_n(x|\lambda) \to 0$ 
almost everywhere on $\partial T$.

\smallskip

Now we may assume without loss of generality that we have 
$\gh = d\nu /d\mm \in L^1(\partial  T,\mm)$. Then there is a sequence $(\gh_k)_{k \in \N}$
of continuous functions on $\partial T$ such that
$$
\sum_k \| \gh - \gh_k \|_1 < \infty\,.
$$ 
Set 
$$
f_k(x) = \int_{\partial T} K(x,\xi|\lambda)\,\hor_n(x,\xi|\lambda)\,\gh_k(\xi)\,d\xi\,.
$$
By Lemma \ref{lem:max} and Proposition \ref{pro:max},
$$
\sum_k \mm \bigl[ \mathfrak{M}_a(\gh-\gh_k) \ge \ep \bigr] 
\le C_a\, C \sum_k \| \gh - \gh_k \|_1/\ep 
< \infty 
$$
for every $\ep > 0$. By the Borel-Cantelli Lemma, this yields that 
$$
\mm(A) = 1\,,\quad\text{where}\quad 
A = \Bigl\{ \xi \in \partial T : \lim_{k \to \infty} \mathfrak{M}_a(\gh-\gh_k)(\xi)  = 0 \Bigr\}.
$$
For each $k$, the function on $\wh T$
with values $\gh_k(\xi)$ for $\xi \in \partial T$ and $f_k(x)/\Phi_n(x|\lambda)$
for $x \in T$ is continuous on $\wh T$ by Theorem \ref{thm:laDir}. This readily implies that for
$\xi \in A$, we have convergence as proposed in \eqref{eq:Fat}.
\end{proof}

We now come back to continuous boundary functions and Theorem \ref{thm:laDir}.
For $n \ge 1$, we cannot expect uniqueness of $f$ as a polyharmonic function of order $n+1$
which has the asymptotic behaviour of \eqref{eq:asymp}. Indeed, \eqref{eq:asy} shows that we can add
polyharmonic functions of lower order such that the limit in Theorem \ref{thm:laDir}
remains the same. However, for the case $n=0$, i.e., for $\lambda$-harmonic
functions, we can investigate uniqueness: this case corresponds to the classical
Dirichlet problem at infinity. Indeed, for real $\lambda > \rho$ one can use
the typical argument, namely the maximum principle, to prove uniqueness.
However, for complex $\lambda$, this is not available, and we have to introduce
another method.

\begin{theorem}\label{thm:launique} 
Let $\lambda \in \C \setminus [-\rho\,,\,\rho]$. 
For $\gh \in \mathcal{C}(\partial T)$, the function
$$
h_{\gh}(x) = \int_{\partial T} \gh(\xi)\, K(x,\xi|\lambda)\, d\xi
$$
is the \emph{unique} solution of the $\lambda$-Dirichlet problem with boundary
function $\gh$, i.e., the unique $\lambda$-harmonic function such that
$$
\lim_{x \to \xi} \frac{h_{\gh}(x)}{\Phi(x|\la)} = \gh(\xi)  \quad \text{for every }\; 
\xi \in \partial T.
$$
\end{theorem}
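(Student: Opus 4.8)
The plan is to settle existence first (it is immediate) and then uniqueness, which is the real content. \emph{Existence:} the function $h_{\gh}$ is $\la$-harmonic and satisfies $h_{\gh}(x)/\Phi(x|\la)\to\gh(\xi)$ for every $\xi$ --- this is the case $n=0$ of Theorem~\ref{thm:laDir}, where $\hor_0(x,\xi|\la)\equiv1$ and $\Phi_0(\cdot|\la)=\Phi(\cdot|\la)$. \emph{Uniqueness:} if $h'$ is another $\la$-harmonic function with $h'(x)/\Phi(x|\la)\to\gh(\xi)$ everywhere, then $h:=h_{\gh}-h'$ is $\la$-harmonic with $h(x)/\Phi(x|\la)\to0$ at every $\xi\in\partial T$, and it is enough to prove that this forces $h\equiv0$. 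Write $\tilde h:=h/\Phi(\cdot|\la)$, which makes sense because $\Phi(x|\la)\ne0$ for all $x$ (Lemma~\ref{lem:properties}).

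The first ingredient is that $\tilde h$ is a \emph{bounded} function on $T$; this is what takes over the role played by the maximum principle in the classical argument. Since $\tilde h(x)\to0$ as $x\to\xi$, the definition of the topology of $\wh T$ provides, for each $\xi\in\partial T$, a vertex $v(\xi)\in\pi(o,\xi)$ with $|\tilde h|\le1$ on the whole branch $T_{v(\xi)}$. The arcs $\partial T_{v(\xi)}$ cover the compact space $\partial T$, so finitely many of them, say $\partial T_{v_1},\dots,\partial T_{v_N}$, already do. If $|x|>\max_i|v_i|$, pick any end through $x$ and the arc $\partial T_{v_i}$ containing it; then $v_i\in\pi(o,x)$, i.e.\ $x\in T_{v_i}$, so $|\tilde h(x)|\le1$. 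Thus $|\tilde h|\le1$ off a finite set, and $\tilde h$ is bounded.

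The second ingredient is a spherical-average computation. Fix $v\ne o$, put $k=|v|$, and for $m\ge k$ set $\tau_m(v)=\sum_{y\in T_v,\,|y|=m}h(y)$. Summing $\la h(y)=\tfrac1{q+1}\bigl(h(y^-)+\sum_{z\sim y,\,|z|=m+1}h(z)\bigr)$ over the $q^{m-k}$ vertices $y\in T_v$ with $|y|=m$ (for $m\ge k+1$ each such $y^-$ lies again in $T_v$) gives $\la\tau_m(v)=\tfrac{q}{q+1}\tau_{m-1}(v)+\tfrac1{q+1}\tau_{m+1}(v)$. The characteristic equation $t^2-(q+1)\la t+q=0$ becomes, upon $t=qu$, the equation $qu^2-(q+1)\la u+1=0$ of \eqref{eq:qeq}, whose roots are $F(\la)$ and $\Fc(\la)$; hence $\tau_m(v)=A(v)\bigl(qF(\la)\bigr)^m+B(v)\bigl(q\Fc(\la)\bigr)^m$ for $m\ge k$, with $A(v),B(v)$ depending only on $v$. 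I now compute $\lim_{m\to\infty}q^{-(m-k)}\tau_m(v)/\varphi_m(\la)$ in two ways. First, substituting the Cartier representation \eqref{eq:Cartier} $h=\int_{\partial T}K(\cdot,\xi|\la)\,d\nu(\xi)$ and performing a geometric summation in the spirit of the proof of Proposition~\ref{pro:asy}, splitting $\partial T$ according to $|v\wedge\xi|$, one finds that the contribution of $\xi$ to the coefficient of $\bigl(q\Fc(\la)\bigr)^m$ in $\tau_m(v)$ is a constant $c$, the same for all $\xi\in\partial T_v$ and equal to $0$ for $\xi\notin\partial T_v$ (the ends outside $\partial T_v$ feed only into the $\bigl(qF(\la)\bigr)^m$-coefficient), so that
\[
B(v)=c\,\nu(\partial T_v),\qquad\text{with}\quad c=\frac{q^2\Fc(\la)^2-1}{q\bigl(q\Fc(\la)^2-1\bigr)}\ne0
\]
(the last because $|q\Fc(\la)^2|>1$, by Lemma~\ref{lem:properties}). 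Secondly, \eqref{eq:solverec} together with $|F(\la)|<|\Fc(\la)|$ and $\ac(\la)\ne0$ (Lemma~\ref{lem:properties}) gives $\varphi_m(\la)\sim\ac(\la)\Fc(\la)^m$, hence $\bigl(qF(\la)\bigr)^m/\bigl(q^{m-k}\varphi_m(\la)\bigr)\to0$ and $\bigl(q\Fc(\la)\bigr)^m/\bigl(q^{m-k}\varphi_m(\la)\bigr)\to q^k/\ac(\la)$, so $q^{-(m-k)}\tau_m(v)/\varphi_m(\la)\to q^kB(v)/\ac(\la)$. But $q^{-(m-k)}\tau_m(v)/\varphi_m(\la)=q^{-(m-k)}\sum_{y\in T_v,\,|y|=m}\tilde h(y)=\int_{\partial T_v}\tilde h\bigl(y_m(\xi)\bigr)\,d\mm_v(\xi)$, where $y_m(\xi)$ is the level-$m$ vertex on $\pi(o,\xi)$ and $\mm_v$ is $\mm$ renormalized to $\partial T_v$; as $m\to\infty$ the integrand tends to $0$ at every $\xi$ and is bounded by $\|\tilde h\|_\infty$, so by dominated convergence the integral tends to $0$. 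Therefore $B(v)=0$, i.e.\ $\nu(\partial T_v)=0$, for every $v\ne o$; the consistency relation \eqref{eq:distribution} then forces $\nu(\partial T)=0$ as well, so $\nu=0$ and $h\equiv0$.

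I expect the main obstacle to be the first of the two evaluations, namely the geometric summation that pins $B(v)$ down as $c\,\nu(\partial T_v)$: it is a variant of the computation in Proposition~\ref{pro:asy}, but now one must also follow the terms coming from ends $\xi\notin\partial T_v$ and verify that they contribute only to the $\bigl(qF(\la)\bigr)^m$-part of $\tau_m(v)$. The other point to be careful about is the boundedness of $\tilde h$: it is exactly what legitimizes the dominated-convergence step, and it is the substitute for the maximum principle, which is not available for complex $\la$.
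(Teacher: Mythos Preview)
Your argument is correct, and it follows a genuinely different route from the paper's. The paper does \emph{not} invoke the Cartier representation at all: instead it fixes an arbitrary vertex $x$, takes the spherical average $\bar h(y)$ of $h$ over the sphere $\{v:d(v,x)=d(y,x)\}$, observes that this average is $\lambda$-harmonic and radial about $x$, hence equals $h(x)\,\Phi(x,y|\lambda)$ with $\Phi(x,y|\lambda)=\varphi_{d(x,y)}(\lambda)$, and then bounds $|h(x)|=|\bar h(y)/\Phi(x,y|\lambda)|$ via the uniform smallness $\ep_N=\sup_{|v|\ge N}|\tilde h(v)|\to 0$ together with a uniform bound on the ratio $\Phi(v|\lambda)/\Phi(x,v|\lambda)$. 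Your approach instead recovers the representing distribution $\nu$: the branch sums $\tau_m(v)$ satisfy a second-order recursion whose dominant coefficient $B(v)$ you identify, by a direct geometric summation of $\sum_{y}K(y,\xi|\lambda)$ (which, as you note, is constant in $\xi$ on $\partial T_v$ and purely of type $(qF)^m$ off it), with $c\,\nu(\partial T_v)$; then boundedness of $\tilde h$ plus dominated convergence forces $B(v)=0$. What this buys is a sharper conclusion---you actually show $\nu=0$, not just $h=0$---and the argument stays rooted at $o$, at the cost of the extra input of the representation \eqref{eq:Cartier} and the slightly more delicate summation needed to isolate $B(v)$. The paper's argument is more self-contained (no integral representation) and arguably shorter, its key trick being the freedom to recenter the spherical average at the very point $x$ one wants to evaluate. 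Both proofs share the essential insight that compactness of $\wh T$ turns the pointwise hypothesis $\tilde h\to 0$ into a uniform one, which is the substitute for the unavailable maximum principle.
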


\begin{proof} 
Continuity holds by Theorem \ref{thm:laDir}.
By linearity, we need to prove uniqueness only in the case when $\gh \equiv 0$.
Thus, we assume that $\lambda \cdot h = Ph$ and that 
$\lim_{|y| \to \infty} h(y)/\Phi(y|\lambda) = 0$, and we have to show that $h \equiv 0$.

We extend the notion of the spherical functions as follows:  
$$
\Phi(x,y|\lambda) = \varphi_{d(y,x)}(\lambda)\,, 
$$
where the functions $\varphi_k$ are given by \eqref{eq:solverec}. For fixed $x \in T$, 
this is the unique $\lambda$-harmonic function of $y$ with value $1$ at $x$ which is 
radial with respect to the point $x$.  
Now let us define the spherical average of $h$  around $x$, that is,
the function defined by
$$
\bar h(x) = h(x)\AND \bar h(y) = \frac{1}{(q+1)q^{d(y,x)-1}} \sum_{v \,:\, d(v,x) = d(y,x)} h(v)\,,
\; \text{ if }\;y \ne x.
$$
A short computation shows that $\bar h$ is $\lambda$-harmonic, whence
$\bar h(y) = h(x) \Phi(x,y|\lambda)$. By assumption, the function $\wh T \to \C$ with
value $0$ on $\partial T$ and value $h(y)/\Phi(y|\lambda)$ at $y \in T$ is continuous. 
By uniform continuity
$$
\lim_{N \to \infty} \ep_N = 0\,, \quad \text{where}\quad 
\ep_N = \sup \{|h(y)/\Phi(y|\lambda)| : y \in T\,,\; |y| \ge N \}\,.
$$
Let $y \in T$ be such that $d(y,x) \ge N + |x|$. 
Then every $v \in T$ with $d(y,x)=d(y,v)$ satisfies $|v| \ge N$, 
so that 
$$
|h(v)| \le \ep_N\, |\Phi(v|\lambda)| 
= \ep_N\, |\Phi(x,v|\lambda)| \,\left|\frac{\Phi(v|\lambda)}{\Phi(x,v|\lambda)}\right|
$$
Applying Proposition \ref{pro:asy} once more, to both $\Phi(v|\lambda)$ and $\Phi(x,v|\lambda)\,$,
$$
\frac{\Phi(v|\lambda)}{\Phi(x,v|\lambda)} \sim \Fc(\lambda)^{|v| - d(v,x)} 
= \Fc(\lambda)^{|v| - d(y,x)} \quad \text{as }\; |y| \to \infty\,.
$$ 
Since $\Fc(\lambda)^{|v| - d(v,x)}$ is bounded in absolute value by 
$\max \{ |\Fc(\lambda)|^{|x|}, |\Fc(\lambda)|^{-|x|}\}$, we see that there is a finite
upper bound, say $M_x(\lambda)$, depending only on $x$ and $\lambda$, such that
$$
|h(v)| \le \ep_N\, |\Phi(x,v|\lambda)| \,M_x(\lambda) 
= \ep_N\, |\Phi(x,y|\lambda)|\,M_x(\lambda) \quad \text{whenever }\; d(v,x)=d(y,x)\,. 
$$
Consequently, also the absolute value of the average $\bar h(y)$ has the same upper bound. 
We get
$$
|h(x)| = \left|\frac{\bar h(y)}{\Phi(x,y|\lambda)}\right| \le 
\ep_N\, M_x(\lambda)\,. 
$$
Letting $N \to  \infty\,$, we conclude that $h(x)=0$, and this holds 
for any $x \in T$, as required.
\end{proof}

Theorem \ref{thm:laDir} tells us that for considering the boundary behaviour of a 
$\lambda$-polyharmonic 
function $f$ of order $n$, it first should be normalised by dividing by 
$\Phi^{(n-1)}(\cdot|\lambda)$. 

\begin{lem}\label{lem:uniquepoly} Let $f$ be polyharmonic of order $n$ and
 such that the $\lambda$-harmonic function $h= (\lambda \cdot I - P)^{n-1}f$ satisfies
$$
\lim_{x \to \xi} \frac{h(x)}{\Phi(x|\lambda)} = \gh(\xi) \quad \text{for all }\; 
\xi \in \partial T\,,
$$  
where $\gh \in \mathcal{C}(\partial T)$. Then 
$$
f(x) = \int_{\partial T} \gh(\xi)\, K(x,\xi|\lambda)\,
\hor^{(n-1)}(x,\xi|\lambda)\,d\xi \; + g\,, 
$$
where $g$ is $\lambda$-polyharmonic of order $n-1$.
\end{lem}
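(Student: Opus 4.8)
The plan is to produce the decomposition by subtracting from $f$ an explicit $\lambda$-polyharmonic function $f_0$ of order $n$ manufactured from the boundary datum $\gh$, arranged so that $(\lambda\cdot I - P)^{n-1} f_0$ is exactly the Poisson integral $h_{\gh}$ of $\gh$, and then invoking the uniqueness of the $\lambda$-Dirichlet problem (Theorem~\ref{thm:launique}) to conclude that $(\lambda\cdot I - P)^{n-1}(f - f_0) \equiv 0$.

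First I would set, in the notation of Theorem~\ref{thm:PiWo},
$$
f_0(x) = \int_{\partial T} \gh(\xi)\, K(x,\xi|\lambda)\,\hor_{n-1}(x,\xi|\lambda)\, d\xi\,;
$$
this integral converges for every $x$ (for fixed $x$ the kernel $\xi\mapsto K(x,\xi|\lambda)\,\hor_{n-1}(x,\xi|\lambda)$ is locally constant, hence bounded, and $\gh$ is bounded), and by Theorem~\ref{thm:laDir} applied with $n$ replaced by $n-1$ the function $f_0$ is $\lambda$-polyharmonic of order $n$. The next step is to apply $(\lambda\cdot I - P)^{n-1}$ to $f_0$. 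As $\lambda\cdot I - P$ is a finite-difference operator, $(\lambda\cdot I - P)^{n-1} f_0(x)$ is, for each $x$, a finite linear combination of the values $f_0(y)$, $y$ in the ball of radius $n-1$ around $x$; one may therefore move $(\lambda\cdot I - P)^{n-1}$ under the integral, and Lemma~\ref{lem:new} (with $n-1$ in place of $n$) gives $(\lambda\cdot I - P)^{n-1}\bigl[K(\cdot,\xi|\lambda)\,\hor_{n-1}(\cdot,\xi|\lambda)\bigr] = K(\cdot,\xi|\lambda)$, whence
$$
(\lambda\cdot I - P)^{n-1} f_0(x) = \int_{\partial T} \gh(\xi)\, K(x,\xi|\lambda)\, d\xi = h_{\gh}(x)\,.
$$

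Finally, by hypothesis $h := (\lambda\cdot I - P)^{n-1} f$ is $\lambda$-harmonic and satisfies $\lim_{x\to\xi} h(x)/\Phi(x|\lambda) = \gh(\xi)$ for every $\xi \in \partial T$; by Theorem~\ref{thm:launique} the \emph{only} $\lambda$-harmonic function with this boundary behaviour is $h_{\gh}$, so $h = h_{\gh} = (\lambda\cdot I - P)^{n-1} f_0$. Hence $g := f - f_0$ satisfies $(\lambda\cdot I - P)^{n-1} g = 0$, i.e.\ $g$ is $\lambda$-polyharmonic of order $n-1$, and $f = f_0 + g$ is the asserted representation. I do not expect a genuine obstacle here: the entire weight of the lemma rests on the uniqueness Theorem~\ref{thm:launique}, and once that is in hand the argument is a short formal computation, the only point deserving explicit mention being the (elementary, because finite) interchange of the difference operator $(\lambda\cdot I - P)^{n-1}$ with the integral over $\partial T$.
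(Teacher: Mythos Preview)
Your proof is correct and follows essentially the same approach as the paper: define the explicit integral $f_0$ (the paper calls it $f_{\gh}$), use Lemma~\ref{lem:new} to compute $(\lambda\cdot I - P)^{n-1}f_0 = h_{\gh}$, invoke Theorem~\ref{thm:launique} to identify $h$ with $h_{\gh}$, and conclude that $g = f - f_0$ is $\lambda$-polyharmonic of order $n-1$. Your additional remarks on convergence of the integral and the (finite) interchange of the difference operator with the integral are reasonable elaborations that the paper leaves implicit.
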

 
\begin{proof}
It follows from Theorems \ref{thm:launique} that 
$$
h(x) = h_{\gh}(x) = \int_{\partial T} \gh(\xi)\, K(x,\xi|\lambda)\, d\xi\,.
$$
Set 
$$
f_{\gh}(x) = \int_{\partial T} \gh(\xi)\, K(x,\xi|\lambda)\,
\hor^{(n-1)}(x,\xi|\lambda)\,d\xi\,.
$$
By Lemma \ref{lem:new}, 
$$
(\lambda \cdot I - P)^{n-1} f_{\gh} = h = (\lambda \cdot I - P)^{n-1} f\,.
$$
Therefore $g= f - f_{\gh}$ satisfies $(\lambda \cdot I - P)^{n-1}g = 0$.
\end{proof}

If in the above lemma, the natural normalisation $g\,\big/\,\Phi^{(n-2)}(\cdot|\lambda)$
has continuous boundary values, then $g\,\big/\,\Phi^{(n-1)}(\cdot|\lambda)$ tends to $0$
at the boundary of the tree by \eqref{eq:asy}. Thus, by Theorem \ref{thm:laDir}, 
$f\,\big/\,\Phi^{(n-1)}(\cdot|\lambda)$ has the same boundary limit $\gh$ as 
$(\lambda \cdot I - P)^{n-1}f\,\big/\,\Phi(\cdot|\lambda)$. 

We conclude that for considering an analogue of the classical Riquier problem,
with given boundary functions $\gh_0\,,\dots, \gh_{n-1}\,$, our solution
$f$ should be obtained step-wise: first,  
$(\lambda \cdot I - P)^{n-1}f\,\big/\,\Phi(\cdot|\lambda)$ should have boundary limit 
$\gh_{n-1}\,$, and we take $f_{n-1}=f_{\gh_{n-1}}$ according to Lemma \ref{lem:uniquepoly}.
Next, the function $f - f_{n-1}$ should be polyharmonic of order $n-1$, and
$(\lambda \cdot I - P)^{n-2}(f - f_{n-1})\,\big/\,\Phi(\cdot|\lambda)$ should have
boundary limit $\gh_{n-2}$. We then proceed recursively.  We clarify this by the
next definition.

\begin{dfn}\label{def:Riq}
 Let $\lambda \in \C \setminus [-\rho\,,\,\rho]$ and 
$\gh_0\,, \dots\,, \gh_{n-1} \in \mathcal{C}(\partial T)$. Then a solution of
the associated Riquier problem at infinity is a polyharmonic function
$$
f = f_0 + \dots + f_{n-1}
$$
of order $n$, where each $f_k$ is polyharmonic of order $k+1$ and
$$
\lim_{x \to \xi}  \frac{(\lambda\cdot I - P)^k f_k(x)}{\Phi(x|\la)} = \gh_k(\xi)  
\quad \text{for every }\; 
\xi \in \partial T \,.
$$
\end{dfn}


\begin{cor}\label{cor:Riq}
A solution of the Riquier problem as stated in Definition \ref{def:Riq}  is
given by  the functions 
$$
f_k(x) = \int_{\partial T} \gh_k(\xi)\, K(x,\xi|\lambda)
\,\hor_k(x,\xi|\lambda)\, d\xi\,.
$$
One also has
$$
\lim_{x \to \xi} \frac{f_0(x)+ \dots + f_k(x)}{\Phi_k(x|\la)} 
= \lim_{x \to \xi} \frac{f_k(x)}{\Phi_k(x|\la)} = \gh_k(\xi)  
\quad \text{for every }\; 
\xi \in \partial T.
$$
\end{cor}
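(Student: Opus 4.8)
The plan is to verify directly that the functions $f_k$ fulfil the two requirements in Definition \ref{def:Riq}, and then to read off the two displayed limits from Theorem \ref{thm:laDir} together with \eqref{eq:asy}. For the first requirement I would apply Theorem \ref{thm:laDir} with boundary function $\gh_k$ and with the order index $k$ in place of $n$: this shows at once that $f_k$ is $\lambda$-polyharmonic of order $k+1$ and that $\lim_{x\to\xi} f_k(x)\big/\Phi_k(x|\lambda) = \gh_k(\xi)$ for every $\xi \in \partial T$.

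For the boundary condition on $(\lambda\cdot I - P)^k f_k$ I would move this operator under the integral sign. This is legitimate because $(\lambda\cdot I - P)^k$ is a finite difference operator: $(\lambda\cdot I - P)^k f_k(x)$ is a finite linear combination of the values $f_k(y)$ with $d(x,y)\le k$, each of which is itself an integral over $\partial T$, so the finite sum may be carried inside. By Lemma \ref{lem:new}, $(\lambda\cdot I - P)^k\bigl[K(\cdot,\xi|\lambda)\,\hor_k(\cdot,\xi|\lambda)\bigr] = K(\cdot,\xi|\lambda)$, whence $(\lambda\cdot I - P)^k f_k = h_{\gh_k}$, the $\lambda$-Poisson integral of $\gh_k$ occurring in Theorem \ref{thm:launique}. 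By Theorem \ref{thm:launique} (equivalently, by Theorem \ref{thm:laDir} with $n=0$), $\lim_{x\to\xi} h_{\gh_k}(x)\big/\Phi(x|\lambda) = \gh_k(\xi)$, which is exactly the limit demanded in Definition \ref{def:Riq}. Since each $f_k$ is $\lambda$-polyharmonic of order $k+1\le n$, the sum $f = f_0 + \dots + f_{n-1}$ is annihilated by $(\lambda\cdot I - P)^n$, hence is $\lambda$-polyharmonic of order $n$, and the displayed decomposition exhibits it as a solution of the Riquier problem; this is just the step-wise procedure outlined before the statement, in the spirit of Lemma \ref{lem:uniquepoly}.

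For the final display I would fix $\xi \in \partial T$ and, for $j < k$, write
$$
\frac{f_j(x)}{\Phi_k(x|\lambda)} = \frac{f_j(x)}{\Phi_j(x|\lambda)}\cdot\frac{\Phi_j(x|\lambda)}{\Phi_k(x|\lambda)}\,.
$$
The first factor converges to $\gh_j(\xi)$ as $x\to\xi$ by the part of Theorem \ref{thm:laDir} already invoked, hence remains bounded as $x$ approaches $\xi$; the second factor tends to $0$ as $|x|\to\infty$ by \eqref{eq:asy}, and since that limit is unrestricted in $|x|$ it applies as $x\to\xi$ in $\wh T$. Thus $f_j(x)\big/\Phi_k(x|\lambda)\to 0$ for every $j<k$. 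Dividing the identity $f_0 + \dots + f_k = \sum_{j=0}^{k-1} f_j + f_k$ by $\Phi_k(\cdot|\lambda)$ and letting $x\to\xi$, all terms with $j<k$ vanish in the limit while $f_k(x)\big/\Phi_k(x|\lambda)\to\gh_k(\xi)$; this gives both equalities.

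I do not expect a genuine obstacle: the whole argument is a bookkeeping combination of Theorem \ref{thm:laDir}, Theorem \ref{thm:launique}, Lemma \ref{lem:new} and \eqref{eq:asy}. The only point that deserves explicit justification is the commutation of the difference operator $(\lambda\cdot I - P)^k$ with the boundary integral, handled as indicated above; a secondary, purely routine point is to record that the asymptotics used ($\Phi_j/\Phi_k \to 0$ and the convergence of $f_j/\Phi_j$) hold with no restriction on how $x$ tends to the boundary, so that they are available for the limits $x\to\xi$ used throughout.
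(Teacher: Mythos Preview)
Your proposal is correct and matches the paper's intended route: the corollary is stated without a separate proof, being an immediate consequence of Theorem \ref{thm:laDir}, Lemma \ref{lem:new}, Theorem \ref{thm:launique} and \eqref{eq:asy}, exactly as you assemble them. The only ingredients the paper invokes are those same four results together with the step-wise description preceding Definition \ref{def:Riq}, so there is nothing to add.
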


As already outlined further above, the solution is \emph{not} unique. We can add to $f_k$ some suitable
$\lambda$-polyharmonic function of lower order: normalised by $\Phi_k(x|\lambda)$,
by \eqref{eq:asy}  the latter will tend to zero, as $|x| \to \infty$. What 
is unique is -- by Theorem \ref{thm:launique} -- 
the solution  $(\lambda\cdot I - P)^k f_k  = h_{\gh_k}\,$.

\end{document}